\documentclass[11pt]{article}

\usepackage[margin=1.1in]{geometry}
\usepackage[T1]{fontenc}
\usepackage[utf8]{inputenc}
\usepackage{lmodern}
\IfFileExists{microtype.sty}{\usepackage{microtype}}{}
\usepackage{amsmath,amssymb,amsthm}
\usepackage{booktabs}
\usepackage[numbers,sort&compress]{natbib}
\usepackage[
  colorlinks=true,
  linkcolor=black,
  citecolor=black,
  urlcolor=black,
  pdftitle={U-centering as subset ANOVA: edge regression and higher-order theory},
  pdfauthor={Xianyang Zhang},
  pdfsubject={U-centering, subset ANOVA, edge regression, and Hoeffding-component pairings},
  pdfkeywords={U-centering, subset ANOVA, edge regression, energy distance, Hoeffding decomposition, U-statistic}
]{hyperref}

\allowdisplaybreaks

\theoremstyle{plain}
\newtheorem{theorem}{Theorem}[section]
\newtheorem{proposition}[theorem]{Proposition}
\newtheorem{lemma}[theorem]{Lemma}
\newtheorem{corollary}[theorem]{Corollary}

\theoremstyle{definition}
\newtheorem{definition}[theorem]{Definition}

\theoremstyle{remark}
\newtheorem{remark}[theorem]{Remark}

\newcommand{\R}{\mathbb R}
\newcommand{\E}{\mathbb E}

\newcommand{\Var}{\operatorname{Var}}
\newcommand{\Cov}{\operatorname{Cov}}
\newcommand{\tr}{\operatorname{tr}}
\newcommand{\rank}{\operatorname{rank}}

\newcommand{\range}{\operatorname{range}}
\newcommand{\one}{\mathbf 1}
\newcommand{\cI}{\mathcal I}
\newcommand{\cU}{\mathcal U}
\newcommand{\cH}{\mathcal H}
\newcommand{\cL}{\mathcal L}
\newcommand{\cV}{\mathcal V}

\newcommand{\inner}[2]{\left\langle #1,#2\right\rangle}
\newcommand{\norm}[1]{\left\lVert #1\right\rVert}

\newcommand{\given}{\,\middle|\,}

\title{U-centering as subset ANOVA:\\ edge regression and higher-order theory}
\author{Xianyang Zhang\thanks{Email: zhangxiany@stat.tamu.edu}\\[2pt]
\normalsize Department of Statistics, Texas A\&M University}
\date{August 8, 2026}

\begin{document}
\maketitle

\begin{abstract}
The unbiased sample versions of squared distance covariance and the
Hilbert--Schmidt independence criterion (HSIC) are fourth-order U-statistics,
yet U-centering evaluates them from pairwise arrays in $O(n^2)$ operations.
We show that
U-centering is exactly the least-squares residual obtained after fitting
additive endpoint effects to a symmetric hollow array.  This interpretation
explains the zero row sums and the denominator $n(n-3)$ through the residual
degrees of freedom.

The same pairwise residualization also gives useful regression
identities.  After endpoint effects are removed from both arrays, the
U-centered dependence $t$-statistic is the ordinary slope $t$-statistic
obtained by regressing one adjusted array on the other.  In the two-sample
problem, pooling the observations
and using the between-group pair indicator as the predictor shows that
the generalized-energy statistic is twice the fitted slope.  The
common-endpoint and fully interacted regressions give the same slope but use
different residual standard errors.

For $n\ge2r$, we extend the construction to arrays indexed by $r$-subsets.
Higher-order U-centering removes all effects involving fewer than $r$ sample
labels, leaves zero $(r-1)$-way margins, and projects onto a residual space of
dimension $\binom nr-\binom n{r-1}$.  For two symmetric kernels with $r$
arguments, the normalized inner product of the centered arrays is unbiased
for the cross-moment of their $r$th Hoeffding components.  A direct estimator
can involve products spanning as many as $2r$ observations, but subset-margin
inversion or higher-order U-centering evaluates the same quantity in
$O(n^r)$ operations for fixed $r$.  When both arrays are formed from the same
kernel and sample, this becomes a nonnegative unbiased estimator of the
variance of the highest-order Hoeffding component.
\end{abstract}

\begin{center}
\small
\textit{Keywords}: ANOVA decomposition, distance covariance, edge regression,
energy distance, Hilbert--Schmidt independence criterion, Hoeffding
decomposition, U-centering, U-statistic, variance component.
\end{center}

\section{Introduction}
\label{sec:introduction}
Many dependence measures are built from pairwise comparisons.  Distance
covariance uses distances between observations
\citep{SzekelyRizzoBakirov2007}, whereas the Hilbert--Schmidt independence
criterion (HSIC) uses entries of kernel Gram matrices
\citep{GrettonEtAl2008,SejdinovicEtAl2013}.  The usual unbiased estimators of
squared distance covariance and HSIC have a useful but initially surprising
property: although their direct formulas involve as many as four distinct
observations, they can be computed from two $n\times n$ arrays in $O(n^2)$
arithmetic
\citep{SzekelyRizzo2014,SongEtAl2012}.  The same algebra appears in martingale
difference divergence and related conditional-mean dependence measures
\citep{ShaoZhang2014,ZhangYaoShao2018}.  For real-valued observations and ordinary
Euclidean distance, the additional ordering structure permits an
$O(n\log n)$ algorithm \citep{HuoSzekely2016}.  In this paper, we instead
study exact centering and inner-product identities for general subset arrays,
without imposing an ordering structure on the observations.  We use the
following notation.  Write
\[
 [n]=\{1,\ldots,n\},
 \qquad
 \binom{[n]}r=\{I\subseteq[n]:|I|=r\}.
\]
For any positive integer $s$, let $\one_s$ denote the all-ones vector in
$\R^s$.
Let $O_i=(X_i,Y_i)$, $i\in[n]$, be independent copies of $O=(X,Y)$, where
$X$ and $Y$ take values in $\mathcal X$ and $\mathcal Y$, respectively.  If
$I=\{i_1<\cdots<i_r\}$, write
$X_I=(X_{i_1},\ldots,X_{i_r})$, and define $Y_I$ similarly.

For a pairwise statistic, the observed entries are indexed by unordered pairs
of distinct sample labels.  We display them as a symmetric \emph{hollow}
matrix, with the absent diagonal cells set to zero.  Let $A=(A_{ij})$ be such
a symmetric hollow array. For $n\ge3$, ordinary U-centering is
\begin{equation}\label{eq:pairwise-ucenter-intro}
 \widetilde A_{ij}
 =A_{ij}
 -\frac{A_{i\cdot}+A_{j\cdot}}{n-2}
 +\frac{A_{\cdot\cdot}}{(n-1)(n-2)},
 \qquad i\ne j,
 \qquad
 \widetilde A_{ii}=0,
\end{equation}
where
\[
 A_{i\cdot}=\sum_{j\in[n]\setminus\{i\}}A_{ij},
 \qquad
 A_{\cdot\cdot}
 =\sum_{\substack{i,j\in[n]\\i\ne j}}A_{ij}
 =2\sum_{1\le i<j\le n}A_{ij}.
\]
For a second symmetric hollow array $B=(B_{ij})$, let $\widetilde B$ denote
its U-centered version, obtained by applying
\eqref{eq:pairwise-ucenter-intro} with $B$ in place of $A$.  When $n=3$, there
are three pairwise observations and three identifiable
endpoint effects.  The additive endpoint model can therefore fit every hollow
symmetric array exactly, so its residual---and hence its U-centered
array---is zero.  For $n\ge4$, the normalized cross-product is
\begin{equation}\label{eq:intro-ucov}
 \frac{1}{n(n-3)}
 \sum_{i\ne j}\widetilde A_{ij}\widetilde B_{ij}.
\end{equation}
For squared distance covariance, unbiased HSIC, and related pair-kernel
statistics, \eqref{eq:intro-ucov} is a fourth-order U-statistic even though
its matrix implementation is quadratic in $n$.

The row sums and grand sum in \eqref{eq:pairwise-ucenter-intro} have a direct
least-squares interpretation.  Treat $A_{ij}$ as an observation on the edge
$\{i,j\}$ of a complete graph and fit
\[
 A_{ij}=u_i+u_j+\varepsilon_{ij}.
\]
The least-squares residual is exactly \eqref{eq:pairwise-ucenter-intro}.  Its
normal equations state that the residuals incident to every label sum to zero,
which explains the zero row and column sums.  Moreover, the residual space has
dimension $\binom n2-n=n(n-3)/2$.  Since the sum in
\eqref{eq:intro-ucov} runs over ordered pairs and therefore counts each edge
twice, its denominator is twice the residual degrees of freedom, namely
$n(n-3)$.

The same residual projector also gives regression interpretations
of dependence and two-sample statistics.  In both problems, pairwise
quantities are adjusted for endpoint effects before a slope is fitted.  For
dependence, both pair arrays are adjusted before one is regressed on the
other.  For two-sample testing, pairwise dissimilarities are regressed on the
between-group pair indicator.  The common-endpoint model assigns one endpoint
effect to each pooled observation across all pair types.  The fully interacted
regression introduced here allows that effect to differ between within- and
between-group pairs and yields the pooled residual standard error used by
\citet{ChakrabortyZhang2021}.  Section~\ref{sec:edge-regression} also relates
the common-endpoint studentization to the pooled U-centered estimator of
\citet{GaoShao2023}, before we turn to higher-order subsets.

We next extend the same construction to larger subsets.  For $n\ge2r$ and an array
$(A_I:I\in\binom{[n]}r)$, write $\cU_{n,r}$ for the higher-order U-centering
operator: $\cU_{n,r}A$ is the unique least-squares residual obtained after
fitting all effects involving fewer than $r$ labels.  Its explicit formula is
given in Definition~\ref{def:higher-u}.  In particular, its $(r-1)$-way
margins vanish:
  \begin{equation*}
 \sum_{i\notin J}(\cU_{n,r}A)_{J\cup\{i\}}=0,
 \qquad J\in\binom{[n]}{r-1}.
\end{equation*}
The residual degrees of freedom are
\[
 d_{n,r}
 =\binom nr-\binom n{r-1}
 =\binom nr\frac{n-2r+1}{n-r+1}.
\]
At $r=1$ this is ordinary centering; at $r=2$ it is
\eqref{eq:pairwise-ucenter-intro}; and at $r=3$ it removes the grand mean,
one-label effects, and two-label effects from a triple-indexed array.

The finite-sample ANOVA above is deterministic, while its statistical
interpretation follows from the population Hoeffding decomposition.  For
symmetric square-integrable kernels $a:\mathcal X^r\to\R$ and
$b:\mathcal Y^r\to\R$, let $a_r$ and $b_r$ denote their $r$th
Hoeffding components (see Section~\ref{sec:general_hoeffding}).  We study
  \begin{equation*}
 \Theta_r(a,b)
 =\E\!\left[a_r(X_1,\ldots,X_r)b_r(Y_1,\ldots,Y_r)\right].
\end{equation*}
For $r\ge1$, if $X$ and $Y$ are independent, then
$a_r(X_1,\ldots,X_r)$ and $b_r(Y_1,\ldots,Y_r)$ are independent.  Since each
$r$th Hoeffding component has mean zero, $\Theta_r(a,b)=0$.
Different component levels can describe different features of dependence, as
the four-argument matching-kernel example in Section~\ref{sec:example}
illustrates.

A direct unbiased estimator groups products $a(X_I)b(Y_J)$ according to the
overlap size $s=|I\cap J|$ and combines the resulting averages with weights
$(-1)^{r-s}\binom rs$.  The weights remove the Hoeffding components indexed
by $j<r$ and retain only the $r$th component.  For fixed $r$, both subset-margin
inversion and the normalized inner product of higher-order U-centered arrays
evaluate the scalar in $O(n^r)$; the latter route also returns the residual
arrays:
  \begin{equation*}
 \frac{1}{d_{n,r}}
 \sum_{I\in\binom{[n]}r}
 (\cU_{n,r}A)_I(\cU_{n,r}B)_I,
 \qquad A_I=a(X_I),\quad B_I=b(Y_I).
\end{equation*}
The projection representation also yields zero margins and residual degrees
of freedom, and shows that centering either array is sufficient.

Hoeffding decompositions and U-statistics are classical
\citep{Hoeffding1948,Serfling1980}.  Finite-population and exchangeable
extensions are developed by \citet{BloznelisGotze2001} and
\citet{Peccati2004}.  The rank theory of inclusion matrices goes back to
\citet{Gottlieb1966}, while projection methods for functions on fixed-size
subsets are discussed by \citet{Dunkl1978,Filmus2016}.  We use this established
projector and rank theory as the algebraic basis for the statistical
development below.  For $n\ge 2r$, \citet[Theorem~1]{WangLindsay2014} give an
exact unbiased estimator of the full variance of a general order-$r$
U-statistic.  Our equal-kernel specialization instead concerns only the
variance of the highest-order Hoeffding component.  In this case, the
projection identity rewrites the overlap-based estimator as a nonnegative
residual mean square.
Against this background, our main contributions are as follows.
\begin{itemize}
\item We show that empirical U-centering is exactly subset ANOVA
residualization.  This interpretation explains the familiar pairwise formula,
zero margins, and residual degrees of freedom, and yields an explicit formula
for general $r$-subset arrays from observable subset margins.
\item At the pairwise level, we give regression representations for dependence
and two-sample homogeneity statistics.  After endpoint effects are removed,
the U-centered dependence $t$-statistic is the ordinary slope $t$-statistic.
In the two-sample problem, the generalized-energy statistic is twice the slope
on the between-group pair indicator.  The common-endpoint and fully interacted
regressions are nested and estimate the same slope but use different residual
standard errors.
\item Higher-order U-centering turns each kernel array into a zero-margin ANOVA
residual.  The normalized inner product of two such residuals equals the direct
overlap estimator.  For fixed $r$, this scalar can be evaluated in $O(n^r)$
operations either from a shared subset-margin table or by higher-order
U-centering.  The same projection gives a regression formulation for
$r$-subset arrays after all lower-order subset effects have been removed.
\item As an application, we show that the overlap-based unbiased estimator of
the variance of the highest-order Hoeffding component equals the normalized
squared norm of the higher-order U-centered residual, making its
nonnegativity immediate.
\end{itemize}

Sections~\ref{sec:pairwise-anova} and~\ref{sec:pairwise-hoeffding} develop the
pairwise projection and population U-statistic identities.
Section~\ref{sec:edge-regression} gives the pair-array regressions, and
Section~\ref{sec:higher-order-u-centering} extends the construction to
$r$-subset arrays.  Section~\ref{sec:population-pairings} develops the
population pairing and two $O(n^r)$ evaluations of its unbiased estimator,
then specializes the projection identity to the variance of the highest-order
Hoeffding component.  The
two-sample regression proofs are collected in
Appendix~\ref{app:two-sample-nesting}.

\section{Pairwise U-centering as ANOVA residualization}
\label{sec:pairwise-anova}

We first establish the projection interpretation in the pairwise case.
Because unordered pairs of sample labels are the edges of a complete graph,
a pairwise array can be viewed as an edge-indexed response.  U-centering
removes the additive effects associated with the two endpoints of each edge
and retains the interaction orthogonal to those endpoint effects.

\subsection{Symmetric hollow arrays}
A symmetric pairwise array assigns one value to each unordered pair
$\{i,j\}\in\binom{[n]}2$.  We display these values in an $n\times n$ matrix,
although its diagonal cells are not observations: a U-statistic uses distinct
sample labels, so the pairs $\{i,i\}$ are absent.  Setting the diagonal to
zero embeds the edge-indexed array in a symmetric matrix without introducing
self-pairs.  Let
\[
 \cV_{n,2}
 =\left\{A\in\R^{n\times n}:A=A^\mathsf T,\ A_{ii}=0
   \text{ for every }i\right\}.
\]
For $A=(A_{ij})$ and $B=(B_{ij})$ in $\cV_{n,2}$, define their edge inner
product by
\begin{equation}\label{eq:edge-inner}
  \inner{A}{B}_{E}=\sum_{i<j}A_{ij}B_{ij}.
\end{equation}
The associated edge norm is $\norm{A}_E=\inner{A}{A}_E^{1/2}$.
\begin{definition}[Pairwise U-centering]\label{def:pairwise-u}
For $n\ge3$ and $A\in\cV_{n,2}$, define $\widetilde A=\cU_{n,2}A$ by
\begin{equation}\label{eq:pairwise-u}
 \widetilde A_{ij}
 =A_{ij}-\frac{A_{i\cdot}+A_{j\cdot}}{n-2}
       +\frac{A_{\cdot\cdot}}{(n-1)(n-2)},
 \quad i\ne j,
 \qquad
 \widetilde A_{ii}=0.
\end{equation}
\end{definition}

The two endpoint terms remove their fitted additive contributions, and the
final grand-sum term corrects for their shared overall mean.  Because the
diagonal entries vanish, this notation agrees
with the standard U-centering formula.  The zero-row-sum and idempotence
properties were recorded by \citet{SzekelyRizzo2014}.  We next show that both
properties follow from an exact least-squares representation.

\subsection{The exact ANOVA characterization}

Define edge vectorization by
$\operatorname{vec}_E(A):=a_E=(A_{ij}:i<j)\in\R^{\binom{n}{2}}$; for a second array $B$,
define $\operatorname{vec}_E(B):=b_E=(B_{ij}:i<j)$.  Let
$Z\in\R^{\binom{n}{2}\times n}$ be the unsigned
edge-by-vertex incidence matrix,
  \begin{equation*}
 Z_{\{i,j\},k}=\mathbf 1\{k\in\{i,j\}\}.
\end{equation*}
Each row of $Z$ corresponds to one edge and contains exactly two ones in the
columns of its endpoints.  Consider the additive pair model
\begin{equation}\label{eq:pair-additive-model}
  A_{ij}=u_i+u_j+\varepsilon_{ij},\qquad i<j.
\end{equation}
Equivalently, writing $u_i=\mu/2+\alpha_i$ and imposing
$\sum_i\alpha_i=0$ gives the familiar ANOVA form
\begin{equation}\label{eq:pair-anova-model}
  A_{ij}=\mu+\alpha_i+\alpha_j+\varepsilon_{ij}.
\end{equation}
This fixed-array decomposition is related to the round-robin ANOVA underlying
the Social Relations Model, which separates actor, partner, and relationship
effects \citep{WarnerKennyStoto1979,KennyLaVoie1984}; in our setting symmetry
constrains the actor and partner roles to a common endpoint effect, and
$\varepsilon_{ij}$ is a deterministic least-squares residual.

\begin{theorem}[U-centering is pairwise ANOVA residualization]
\label{thm:pair-anova}
Let $n\ge3$ and $A\in\cV_{n,2}$.  The U-centered edge vector is the
least-squares residual from \eqref{eq:pair-additive-model}:
\begin{equation}\label{eq:pair-projection}
  \operatorname{vec}_{E}(\widetilde A)
  =M_{n,2}a_E,
  \qquad
  M_{n,2}=I_{\binom{n}{2}}-Z(Z^\mathsf TZ)^{-1}Z^\mathsf T.
\end{equation}
The least-squares estimates in the parametrization
\eqref{eq:pair-anova-model} are
\begin{equation}\label{eq:pair-ls}
 \widehat\mu=\frac{A_{\cdot\cdot}}{n(n-1)},
 \qquad
 \widehat\alpha_i
 =\frac{A_{i\cdot}-A_{\cdot\cdot}/n}{n-2}.
\end{equation}
Consequently, the fitted residual is exactly \eqref{eq:pairwise-u}.
\end{theorem}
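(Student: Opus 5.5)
The plan is to compute $Z^\mathsf TZ$ explicitly, invert it, and read off the projection $H=Z(Z^\mathsf TZ)^{-1}Z^\mathsf T$ entrywise.

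\emph{Step 1: the Gram matrix.} The $(k,l)$ entry of $Z^\mathsf TZ$ counts edges containing both $k$ and $l$. Each vertex lies in $n-1$ edges, and two distinct vertices share exactly one edge, so
\[
 Z^\mathsf TZ=(n-2)I_n+J_n, \qquad J_n=\one_n\one_n^\mathsf T .
\]
Its eigenvalues are $n-2$, on $\one_n^\perp$, and $2(n-1)$, on $\one_n$. Both are positive for $n\ge3$, so $Z$ has full column rank and the least-squares fit is unique. Because $J_n^2=nJ_n$, the Sherman--Morrison formula gives
\[
 (Z^\mathsf TZ)^{-1}=\frac{1}{n-2}\Bigl(I_n-\frac{1}{2(n-1)}J_n\Bigr).
\]

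\emph{Step 2: the fitted coefficients.} We have $(Z^\mathsf Ta_E)_k=A_{k\cdot}$ and $\one_n^\mathsf TZ^\mathsf Ta_E=A_{\cdot\cdot}$, since each edge is counted twice. Therefore
\[
 \widehat u_k=\frac{A_{k\cdot}-A_{\cdot\cdot}/(2(n-1))}{n-2}.
\]
Next set $\widehat\mu/2=\bar u=n^{-1}\sum_k\widehat u_k$ and $\widehat\alpha_i=\widehat u_i-\bar u$, which satisfy $\sum_i\widehat\alpha_i=0$. This reparametrization is a bijection onto the same column space, so it yields the least-squares estimates in \eqref{eq:pair-anova-model}. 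The computation is short:
\[
 \sum_k\widehat u_k=\frac{A_{\cdot\cdot}\bigl(1-n/(2(n-1))\bigr)}{n-2}=\frac{A_{\cdot\cdot}}{2(n-1)}.
\]
Hence $\widehat\mu=A_{\cdot\cdot}/(n(n-1))$ and $\widehat\alpha_i=(A_{i\cdot}-A_{\cdot\cdot}/n)/(n-2)$, which is \eqref{eq:pair-ls}.

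\emph{Step 3: the residual.} The residual on edge $\{i,j\}$ is
\[
 A_{ij}-\widehat u_i-\widehat u_j
 =A_{ij}-\frac{A_{i\cdot}+A_{j\cdot}}{n-2}+\frac{A_{\cdot\cdot}}{(n-1)(n-2)},
\]
which matches \eqref{eq:pairwise-u}. The residual is $M_{n,2}a_E$ by definition of the least-squares projector, which gives \eqref{eq:pair-projection}. The diagonal convention $\widetilde A_{ii}=0$ plays no role, because $\operatorname{vec}_E$ only reads the off-diagonal entries $i<j$.

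The only real obstacle is the bookkeeping in the inverse. In particular, one must keep track of the factor $2$ between $A_{\cdot\cdot}$ (ordered pairs) and the edge sum, because it produces the $2(n-1)$ in the inverse. That $2(n-1)$ then cancels to give the $(n-1)(n-2)$ coefficient in the residual. As a check, I would verify that $M_{n,2}a_E$ is orthogonal to $\operatorname{range}(Z)$, i.e.\ that the residual row sums vanish: $A_{i\cdot}-\frac{(n-1)A_{i\cdot}+(A_{\cdot\cdot}-A_{i\cdot})}{n-2}+\frac{A_{\cdot\cdot}}{n-2}=0$.
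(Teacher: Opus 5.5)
Your proposal is correct and follows the paper's proof essentially step for step: compute $Z^\mathsf TZ=(n-2)I_n+\one_n\one_n^\mathsf T$, invert it, read off $\widehat u_i$, form the residual $A_{ij}-\widehat u_i-\widehat u_j$, and recover $\widehat\mu$ and $\widehat\alpha_i$ through $\widehat\mu=2\bar u$ and $\widehat\alpha_i=\widehat u_i-\widehat\mu/2$. Your eigenvalue argument for invertibility and your row-sum check are small additions. The paper only asserts nonsingularity in the proof and treats the zero row sums separately in Corollary~\ref{cor:pair-basic}.
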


\begin{proof}
We compute the Gram matrix of the incidence design and apply its inverse to
the row-sum vector.

Each edge contains two vertices, and two distinct vertices belong together to
exactly one edge.  Therefore
  \begin{equation*}
 Z^\mathsf TZ=(n-2)I_n+\one_n\one_n^\mathsf T.
\end{equation*}
The matrix is nonsingular for $n\ge3$, with inverse
\begin{equation}\label{eq:ztz-inverse}
 (Z^\mathsf TZ)^{-1}
 =\frac{1}{n-2}I_n
 -\frac{1}{2(n-1)(n-2)}\one_n\one_n^\mathsf T.
\end{equation}
Since $(Z^\mathsf Ta_E)_i=A_{i\cdot}$ and
$\one_n^\mathsf TZ^\mathsf Ta_E=A_{\cdot\cdot}$, the fitted vertex coefficient is
\[
 \widehat u_i
 =\frac{A_{i\cdot}}{n-2}
  -\frac{A_{\cdot\cdot}}{2(n-1)(n-2)}.
\]
Thus
\[
 A_{ij}-\widehat u_i-\widehat u_j
 =A_{ij}-\frac{A_{i\cdot}+A_{j\cdot}}{n-2}
  +\frac{A_{\cdot\cdot}}{(n-1)(n-2)},
\]
which is \eqref{eq:pairwise-u}.  Finally,
$\widehat\mu=2n^{-1}\sum_i\widehat u_i$ and
$\widehat\alpha_i=\widehat u_i-\widehat\mu/2$ give
\eqref{eq:pair-ls}.
\end{proof}

\begin{corollary}
\label{cor:pair-basic}
Let
\begin{align*}
 \cH_{n,2}
 &=\left\{C\in\cV_{n,2}:C\one_n=0\right\},\\
 \cL_{n,2}
 &=\left\{C\in\cV_{n,2}:C_{ij}=u_i+u_j
      \text{ for some }u\in\R^n\right\}.
\end{align*}
Then the edge space is the orthogonal direct sum
  \begin{equation*}
 \cV_{n,2}=\cL_{n,2}\oplus\cH_{n,2},
 \qquad \cL_{n,2}\perp\cH_{n,2},
\end{equation*}
and
\[
 \range(\cU_{n,2})=\cH_{n,2},
 \qquad
 \ker(\cU_{n,2})=\cL_{n,2}.
\]
In particular,
  \begin{equation*}
 \widetilde A\one_n=0,
 \qquad
 \one_n^\mathsf T\widetilde A=0,
 \qquad
 \cU_{n,2}^2=\cU_{n,2}.
\end{equation*}
Moreover, $\cU_{n,2}$ is self-adjoint under the edge inner product.
Equivalently, $\widetilde A=\cU_{n,2}A$ is the unique symmetric hollow array
such that
\[
 \widetilde A\one_n=0
 \qquad\text{and}\qquad
 A-\widetilde A\in\cL_{n,2}.
\]
\end{corollary}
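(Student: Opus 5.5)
Everything follows from Theorem~\ref{thm:pair-anova} once we identify $\cV_{n,2}$ with $\R^{\binom n2}$ via $\operatorname{vec}_E$. This map is a linear isometry from $(\cV_{n,2},\inner{\cdot}{\cdot}_E)$ onto Euclidean space, because a symmetric hollow array is determined by its entries with $i<j$. Under this identification, $\cU_{n,2}$ becomes the matrix $M_{n,2}=I-Z(Z^\mathsf TZ)^{-1}Z^\mathsf T$. Here $Z^\mathsf TZ$ is invertible for $n\ge3$, as shown in the proof of the theorem. Hence $M_{n,2}$ is the orthogonal projector onto $\range(Z)^\perp$. In particular it is symmetric and idempotent, which gives both self-adjointness and $\cU_{n,2}^2=\cU_{n,2}$.

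Next I identify the two subspaces. By definition of $Z$, $(Zu)_{\{i,j\}}=u_i+u_j$, so $\operatorname{vec}_E(\cL_{n,2})=\range(Z)$. For orthogonality, a vector $c_E$ lies in $\range(Z)^\perp$ exactly when $Z^\mathsf Tc_E=0$. The $k$th entry of $Z^\mathsf Tc_E$ is $\sum_{j\ne k}C_{kj}=(C\one_n)_k$, using hollowness. Hence $\operatorname{vec}_E(\cH_{n,2})=\ker Z^\mathsf T=\range(Z)^\perp$.

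For a projector $P$ onto a subspace $S$, the standard facts are $\range(P)=S$, $\ker(P)=S^\perp$, and the ambient space is $S\oplus S^\perp$. Applying these with $S=\range(Z)^\perp$ yields
\[
\cV_{n,2}=\cL_{n,2}\oplus\cH_{n,2},\qquad \cL_{n,2}\perp\cH_{n,2},\qquad \range(\cU_{n,2})=\cH_{n,2},\qquad \ker(\cU_{n,2})=\cL_{n,2}.
\]
The identity $\widetilde A\one_n=0$ then follows from $\widetilde A\in\cH_{n,2}$. Since $\widetilde A$ is symmetric, $\one_n^\mathsf T\widetilde A=(\widetilde A\one_n)^\mathsf T=0$ as well.

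For the characterization, existence is immediate: $\widetilde A\in\cH_{n,2}$, and $A-\widetilde A=(I-\cU_{n,2})A\in\range(Z)$ corresponds to an element of $\cL_{n,2}$. For uniqueness, suppose $C\in\cH_{n,2}$ and $A-C\in\cL_{n,2}$. Then $\widetilde A-C$ lies in both $\cH_{n,2}$ and $\cL_{n,2}$, which are orthogonal, so $\widetilde A-C$ is orthogonal to itself and hence zero. I do not expect any step to be a real obstacle. The only point needing care is checking that $\operatorname{vec}_E$ is an isometry, so that the Euclidean projector facts transfer verbatim to the edge inner product, and that hollowness makes the row sums match $Z^\mathsf T$.
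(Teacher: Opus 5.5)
Your proposal is correct and follows essentially the same route as the paper's proof: transfer everything through the isometry $\operatorname{vec}_E$, identify $\cL_{n,2}$ with $\operatorname{col}(Z)$ and $\cH_{n,2}$ with $\ker(Z^\mathsf T)$ via the row-sum formula, and read off every claim from the fact that $M_{n,2}$ is the orthogonal projector onto $\ker(Z^\mathsf T)$. Your uniqueness argument, that $\cL_{n,2}\cap\cH_{n,2}=\{0\}$, simply spells out the step the paper attributes to the orthogonal decomposition.
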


\noindent The proof is given in
Appendix~\ref{app:proof-pair-basic}.

\subsection{Residual degrees of freedom and the normalization}
The edge space has dimension $\binom n2$, whereas the additive fitted space
has dimension $n$.  It follows that
  \begin{equation*}
 d_{n,2}:=\dim(\cH_{n,2})
 =\binom n2-n
 =\frac{n(n-3)}{2}.
\end{equation*}
The same rank was obtained from a vectorized representation of U-centering by
\citet{ZhuZhangYaoShao2020}.  For $n\ge4$, using the edge inner product defined in
\eqref{eq:edge-inner},
\begin{equation}\label{eq:pair-normalized-inner}
 \frac{1}{d_{n,2}}\inner{\widetilde A}{\widetilde B}_E
 =\frac{1}{n(n-3)}\sum_{i\ne j}\widetilde A_{ij}\widetilde B_{ij}.
\end{equation}
When $A=B$, \eqref{eq:pair-normalized-inner} is the residual mean square from
the endpoint ANOVA\@.  Self-adjointness and idempotence further give the
one-sided-centering identity
  \begin{equation*}
 \inner{\widetilde A}{\widetilde B}_E
 =\inner{A}{\widetilde B}_E
 =\inner{\widetilde A}{B}_E.
\end{equation*}
This identity underlies familiar simplifications for unbiased squared distance
covariance and HSIC \citep{SzekelyRizzo2014,SongEtAl2012}.

By Theorem~\ref{thm:pair-anova}, the least-squares representation gives the
following closed-form inner-product formula:
\[
 \inner{\widetilde A}{\widetilde B}_E
 =a_E^\mathsf TM_{n,2}b_E
 =a_E^\mathsf Tb_E
 -(Z^\mathsf Ta_E)^\mathsf T
   (Z^\mathsf TZ)^{-1}(Z^\mathsf Tb_E).
\]
Here $Z^\mathsf Ta_E=(A_{1\cdot},\ldots,A_{n\cdot})^\mathsf T$, and
\eqref{eq:ztz-inverse} gives
\begin{equation}\label{eq:pair-contraction}
\begin{split}
 \frac{1}{n(n-3)}\sum_{i\ne j}\widetilde A_{ij}\widetilde B_{ij}
 =\frac{1}{n(n-3)}\bigg\{&
 \sum_{i\ne j}A_{ij}B_{ij}
 -\frac{2}{n-2}\sum_{i=1}^n A_{i\cdot}B_{i\cdot}\\
 &+\frac{A_{\cdot\cdot}B_{\cdot\cdot}}{(n-1)(n-2)}\bigg\}.
\end{split}
\end{equation}
Thus the computation requires only the two arrays, their row sums, and their
grand sums.  Once the arrays have been formed, all terms in
\eqref{eq:pair-contraction} can be accumulated in $O(n^2)$ arithmetic.

\section{\texorpdfstring{{Pairwise Hoeffding decomposition and fourth-order U-statistic identity}}{Pairwise Hoeffding decomposition and fourth-order U-statistic identity}}
\label{sec:pairwise-hoeffding}

We now turn from the sample-label projection to its population target.

\subsection{Population interaction and overlap moments}

Let $a:\mathcal X^2\to\R$ and $b:\mathcal Y^2\to\R$ be symmetric kernels such
that $\E\{a(X_1,X_2)^2\}<\infty$ and
$\E\{b(Y_1,Y_2)^2\}<\infty$.  The pairwise Hoeffding decomposition of $a$ is
\begin{equation}\label{eq:pair-hoeffding}
 a(x_1,x_2)=a_0+a_1(x_1)+a_1(x_2)+a_2(x_1,x_2),
\end{equation}
where
  \begin{align*}
 a_0&=\E a(X_1,X_2)\\
 a_1(x)&=\E a(x,X_2)-a_0\\
 a_2(x_1,x_2)
 &=a(x_1,x_2)-\E a(x_1,X)-\E a(X,x_2)+a_0,
\end{align*}
with $\E\{a_2(x,X)\}=0$ for almost every $x$.
Define $b_0,b_1,b_2$ in the same way and set
\begin{equation}\label{eq:theta2}
 \Theta_2(a,b)=\E\{a_2(X_1,X_2)b_2(Y_1,Y_2)\}.
\end{equation}
In general, $\Theta_2(a,b)$ is a cross-moment and may have either sign.  To
express \eqref{eq:theta2} in terms of observable raw kernels, define
  \begin{align*}
 \eta_2&=\E\{a(X_1,X_2)b(Y_1,Y_2)\}\\
 \eta_1&=\E\{a(X_1,X_2)b(Y_1,Y_3)\}\\
 \eta_0
 &=\E\{a(X_1,X_2)b(Y_3,Y_4)\}
  =\E a(X_1,X_2)\,\E b(Y_1,Y_2).
\end{align*}
Here the subscript records the number of shared sample labels.

\begin{proposition}
\label{prop:pair-overlap}
Let
$\Theta_0=a_0b_0$ and
$\Theta_1=\E\{a_1(X_1)b_1(Y_1)\}$.  Then
\[
 \eta_0=\Theta_0,\qquad
 \eta_1=\Theta_0+\Theta_1,\qquad
 \eta_2=\Theta_0+2\Theta_1+\Theta_2,
\]
and hence
\begin{equation}\label{eq:theta2-overlap}
 \Theta_2(a,b)=\eta_2-2\eta_1+\eta_0.
\end{equation}
\end{proposition}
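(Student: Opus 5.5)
The plan is to substitute the Hoeffding decomposition \eqref{eq:pair-hoeffding} for both kernels into each overlap moment, expand the product, and use the orthogonality of Hoeffding components. Independence of the pairs $O_i$ drives every step. Write
\[
a(X_1,X_2)=a_0+a_1(X_1)+a_1(X_2)+a_2(X_1,X_2),
\]
and similarly for $b$ evaluated at the relevant labels. Then $\eta_s$ is a sum of at most sixteen cross-moments $\E\{a_k(\cdot)b_l(\cdot)\}$, and we identify which survive.

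First, record the mean-zero facts used throughout:
\begin{itemize}
\item $\E a_1(X)=0$, and $\E\{a_2(x,X)\}=0$ for almost every $x$ (and likewise for $b$);
\item $a_1(X_1)$ and $b_1(Y_1)$ are square integrable, which follows from Jensen's inequality and $\E a^2<\infty$, so all cross-moments are finite by Cauchy--Schwarz.
\end{itemize}
The key vanishing rule is as follows. Suppose some label $i$ appears in the arguments of exactly one factor of a product, or appears in $a_2$ but not $b$. Condition on all other observations. Since the $O_i$ are independent, the conditional expectation then integrates that factor over $X_i$ (or $Y_i$) alone. By the mean-zero properties this gives zero.

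Next, apply the rule for each $s$.
\begin{itemize}
\item For $\eta_0$ the labels are disjoint. Every term containing a nonconstant component has a free label, so only $a_0b_0=\Theta_0$ survives.
\item For $\eta_1$, with $b$ at $(Y_1,Y_3)$, only components depending on label $1$ alone can pair. The terms $a_1(X_2)$ and $b_1(Y_3)$ vanish, as does any $a_2$ or $b_2$ term, since label $2$ or label $3$ is integrated out by degeneracy. What remains is $a_0b_0+\E\{a_1(X_1)b_1(Y_1)\}=\Theta_0+\Theta_1$.
\item For $\eta_2$ both labels are shared. The surviving terms are $a_0b_0$, the two diagonal terms $a_1(X_1)b_1(Y_1)$ and $a_1(X_2)b_1(Y_2)$ (each equal to $\Theta_1$ by identical distribution), and $a_2b_2$ giving $\Theta_2$.
\end{itemize}

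We must also check that the cross terms in $\eta_2$ vanish. A term like $a_1(X_1)b_1(Y_2)$ factorizes by independence into $\E a_1\,\E b_1=0$. A term like $a_1(X_1)b_2(Y_1,Y_2)$ vanishes after conditioning on $O_1$, because $\E\{b_2(y,Y_2)\}=0$. This conditioning step on mixed $(X,Y)$ pairs is the one place needing care. The components of $a$ and $b$ at the same label are dependent, since $X_i$ and $Y_i$ may be dependent, so we must always integrate over a label carried by only one side, or by a degenerate $a_2$ or $b_2$. The argument needs Fubini with square integrability, and that is the main bookkeeping obstacle.

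Finally, solve the triangular system. We have
\[
\eta_2-2\eta_1+\eta_0=(\Theta_0+2\Theta_1+\Theta_2)-2(\Theta_0+\Theta_1)+\Theta_0=\Theta_2,
\]
which is \eqref{eq:theta2-overlap}.
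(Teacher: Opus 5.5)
Your proposal is correct and takes essentially the same approach as the paper. Both expand the two kernels by their Hoeffding decompositions, discard every term in which some label appears in a nonconstant component on only one side (integrating over that label with the others held fixed, using canonicality), and then solve the triangular system. Your version spells out the term-by-term enumeration and the square-integrability and Fubini justification that the paper's short proof leaves implicit.
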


\noindent The proof is given in
Appendix~\ref{app:proof-pair-overlap}.

\subsection{The direct estimator and its U-centered form}

For $n\ge4$, set
\[
 A_{ij}=a(X_i,X_j),\qquad B_{ij}=b(Y_i,Y_j),\qquad i\ne j,
\]
and set $A_{ii}=B_{ii}=0$.  With $(n)_q=n!/(n-q)!$, define
  \begin{align*}
 \widehat\eta_2
 &=\frac{1}{(n)_2}\sum_{i\ne j}A_{ij}B_{ij}\\
 \widehat\eta_1
 &=\frac{1}{(n)_3}\sum_{i,j,k\ \mathrm{all\ distinct}}
     A_{ij}B_{ik}\\
 \widehat\eta_0
 &=\frac{1}{(n)_4}\sum_{i,j,k,\ell\ \mathrm{all\ distinct}}
     A_{ij}B_{k\ell}.
\end{align*}
Each term is unbiased for the corresponding $\eta_s$, so
\begin{equation}\label{eq:direct-fourth}
 T_{n,2}^{\mathrm{dir}}
 =\widehat\eta_2-2\widehat\eta_1+\widehat\eta_0
\end{equation}
is unbiased for $\Theta_2(a,b)$.  After symmetrization it is a fourth-order
U-statistic.

\begin{theorem}[Pairwise compression identity]
\label{thm:pair-compression}
For $n\ge4$,
\begin{equation}\label{eq:pair-compression}
 T_{n,2}^{\mathrm{dir}}
 =\frac{1}{d_{n,2}}\sum_{i<j}\widetilde A_{ij}\widetilde B_{ij}
 =\frac{1}{n(n-3)}\sum_{i\ne j}\widetilde A_{ij}\widetilde B_{ij}.
\end{equation}
Thus this fourth-order U-statistic can be evaluated in $O(n^2)$ arithmetic once
the two pairwise arrays have been formed.
\end{theorem}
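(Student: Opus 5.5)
Both sides are bilinear in $(A,B)$, so I will match them term by term. The right-hand side is already expanded in \eqref{eq:pair-contraction} as a combination of the three contractions
\[
S_2=\sum_{i\ne j}A_{ij}B_{ij},\qquad S_r=\sum_i A_{i\cdot}B_{i\cdot},\qquad S_g=A_{\cdot\cdot}B_{\cdot\cdot},
\]
with coefficients $1,\,-2/(n-2),\,1/\{(n-1)(n-2)\}$, all divided by $n(n-3)$. The plan is to rewrite the three overlap sums defining $\widehat\eta_2,\widehat\eta_1,\widehat\eta_0$ in terms of $S_2,S_r,S_g$. I will then substitute into \eqref{eq:direct-fourth} and check that the coefficients coincide. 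The equality of the two right-hand expressions in \eqref{eq:pair-compression} is immediate, because each edge is counted twice in the ordered sum and $2d_{n,2}=n(n-3)$.

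First, $\widehat\eta_2=S_2/(n)_2$. Next, using hollowness ($A_{ii}=B_{ii}=0$), the sum over distinct $i,j,k$ of $A_{ij}B_{ik}$ equals $\sum_i\sum_{j\ne i}\sum_{k\ne i}A_{ij}B_{ik}$ minus the $j=k$ terms. This gives $S_r-S_2$.

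For the four-index sum I will use inclusion–exclusion on $\sum_{i\ne j}\sum_{k\ne \ell}A_{ij}B_{k\ell}=S_g$. From $S_g$ I subtract the configurations where $\{k,\ell\}$ meets $\{i,j\}$. Exactly one shared label can occur in four ordered ways ($k=i$, $k=j$, $\ell=i$, $\ell=j$), each contributing $S_r-S_2$ by symmetry of $A$ and $B$. Two shared labels can occur in two ways, each contributing $S_2$. Hence the sum over distinct $i,j,k,\ell$ equals $S_g-4(S_r-S_2)-2S_2=S_g-4S_r+2S_2$.

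Therefore
\[
T^{\mathrm{dir}}_{n,2}=\frac{S_2}{(n)_2}-\frac{2(S_r-S_2)}{(n)_3}+\frac{S_g-4S_r+2S_2}{(n)_4}.
\]
The final step, and the only real obstacle (purely arithmetic bookkeeping), is to put these terms over the common denominator $(n)_4$ and verify each coefficient:
\begin{itemize}
\item The $S_g$ coefficient is $1/(n)_4=1/\{n(n-3)(n-1)(n-2)\}$. This matches.
\item The $S_r$ coefficient is $-2/(n)_3-4/(n)_4=-2(n-3+2)/(n)_4=-2(n-1)/(n)_4=-2/\{n(n-2)(n-3)\}$. This matches.
\item The $S_2$ coefficient is $1/(n)_2+2/(n)_3+2/(n)_4=\{(n-2)(n-3)+2(n-3)+2\}/(n)_4=(n^2-3n+2)/(n)_4=(n-1)(n-2)/(n)_4=1/\{n(n-3)\}$. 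This matches.
\end{itemize}
This proves \eqref{eq:pair-compression}. The $O(n^2)$ claim then follows from \eqref{eq:pair-contraction}, since forming $\widetilde A,\widetilde B$ and the row and grand sums is quadratic in $n$.
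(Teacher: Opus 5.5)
Your proposal is correct and follows essentially the same route as the paper: both relate the three contractions $\sum_{i\ne j}A_{ij}B_{ij}$, $\sum_i A_{i\cdot}B_{i\cdot}$ and $A_{\cdot\cdot}B_{\cdot\cdot}$ to the overlap sums by counting shared labels, and then substitute into \eqref{eq:pair-contraction}. The only difference is that you invert the triangular system, writing each $\widehat\eta_s$ in terms of the contractions, and you carry out the coefficient check explicitly (correctly), where the paper just says ``simplifying the three coefficients''.
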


\begin{proof}
Expanding the row-sum cross-product and separating the terms with identical
second endpoints gives
\[
 \sum_{i=1}^n A_{i\cdot}B_{i\cdot}
 =(n)_2\widehat\eta_2+(n)_3\widehat\eta_1.
\]
For the product of the two grand sums, the two ordered edges either have the
same two labels, share exactly one label, or are disjoint.  When they have the
same two labels, the second edge has two possible orientations; when they
share one label, there are four choices for the positions of the common
endpoint.  Therefore,
\[
 A_{\cdot\cdot}B_{\cdot\cdot}
 =2(n)_2\widehat\eta_2
  +4(n)_3\widehat\eta_1
  +(n)_4\widehat\eta_0.
\]
Substituting these identities into \eqref{eq:pair-contraction} and simplifying
the three coefficients yields
\[
 \frac{1}{n(n-3)}
 \sum_{i\ne j}\widetilde A_{ij}\widetilde B_{ij}
 =\widehat\eta_2-2\widehat\eta_1+\widehat\eta_0,
\]
which is \eqref{eq:direct-fourth}.
\end{proof}

Theorem~\ref{thm:general-compression} below extends the compression identity to
arbitrary $r$-subsets.  The next proposition gives a complementary pointwise
interpretation: conditionally, each U-centered entry recovers a finite-sample
multiple of the population interaction.

\begin{proposition}
\label{prop:pair-conditional}
For a fixed pair $\{i,j\}$ and $n\ge3$,
\begin{equation}\label{eq:pair-conditional}
 \E\left(\widetilde A_{ij}\mid X_i,X_j\right)
 =\frac{n-3}{n-1}a_2(X_i,X_j).
\end{equation}
\end{proposition}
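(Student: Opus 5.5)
The plan is to compute the conditional expectation of each of the three terms in \eqref{eq:pairwise-u} given $(X_i,X_j)$, substituting the Hoeffding decomposition \eqref{eq:pair-hoeffding} throughout. By linearity, and because the $O_k$ are independent, every entry $A_{kl}$ with $\{k,l\}\cap\{i,j\}=\emptyset$ has conditional mean $a_0$. For $k\notin\{i,j\}$, the mean-zero properties $\E a_1(X)=0$ and $\E a_2(x,X)=0$ give $\E(A_{ik}\mid X_i,X_j)=a_0+a_1(X_i)$. Finally, $A_{ij}=a_0+a_1(X_i)+a_1(X_j)+a_2(X_i,X_j)$ is already measurable with respect to the conditioning variables.

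First, the row sums. Since $A_{i\cdot}=A_{ij}+\sum_{k\ne i,j}A_{ik}$,
\[
\E(A_{i\cdot}\mid X_i,X_j)=A_{ij}+(n-2)\{a_0+a_1(X_i)\},
\]
and the same holds with $i$ and $j$ interchanged. Next, the grand sum. Split the ordered pairs in $A_{\cdot\cdot}$ into three groups:
\begin{itemize}
\item the pair $\{i,j\}$ itself, which contributes $2A_{ij}$;
\item pairs sharing exactly one label with $\{i,j\}$; there are $2(n-2)$ unordered pairs through $i$ and through $j$ combined, each counted twice, contributing $4(n-2)a_0+4(n-2)\{a_1(X_i)+a_1(X_j)\}/2$, that is, $2(n-2)\{2a_0+a_1(X_i)+a_1(X_j)\}$;
\item the remaining $(n-2)(n-3)$ ordered pairs, each with conditional mean $a_0$.
\end{itemize}
Hence
\[
\E(A_{\cdot\cdot}\mid X_i,X_j)=2A_{ij}+2(n-2)\{a_1(X_i)+a_1(X_j)\}+\{4(n-2)+(n-2)(n-3)\}a_0 .
\]

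Then I substitute these three expressions into \eqref{eq:pairwise-u} and collect coefficients on the four terms $a_0$, $a_1(X_i)$, $a_1(X_j)$ and $a_2(X_i,X_j)$ separately, writing $A_{ij}$ in its Hoeffding form.
\begin{itemize}
\item \emph{The $a_0$ coefficient.} It is $1-2+\{4(n-2)+(n-2)(n-3)\}/\{(n-1)(n-2)\}$. Since $4+(n-3)=n+1$, this equals $-1+(n+1)/(n-1)$, which is not obviously zero. The $a_0$ contributions from $A_{ij}$ must also be included, so I will recount carefully. The $a_0$ contributions are $1$ from $A_{ij}$, then $-[2(n-2)+2]/(n-2)$ from the two row sums (each row sum contributes $A_{ij}$ plus $(n-2)a_0$), then $[2+4(n-2)+(n-2)(n-3)]/\{(n-1)(n-2)\}$ from the grand sum. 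Since $2+(n-2)(n+1)=n(n-1)$, the total is $1-2-2/(n-2)+n/(n-2)=0$.
\item \emph{The $a_1(X_i)$ coefficient.} The analogous count is $1-(2+n-2)/(n-2)+(2+2(n-2))/\{(n-1)(n-2)\}=1-n/(n-2)+2/(n-2)=0$.
\item \emph{The $a_2$ coefficient.} It is $1-2/(n-2)+2/\{(n-1)(n-2)\}=\{(n-1)(n-2)-2(n-1)+2\}/\{(n-1)(n-2)\}=(n^2-5n+6)/\{(n-1)(n-2)\}=(n-3)/(n-1)$, as claimed.
\end{itemize}

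The main obstacle is bookkeeping: each raw entry in the row and grand sums must be tracked through its Hoeffding components. The case split (the pair itself, pairs sharing one label, disjoint pairs) handles this. As a cross-check I would note an alternative: by Theorem~\ref{thm:pair-anova} the operator annihilates additive arrays $u_k+u_l$, so the $a_0$ and $a_1$ parts vanish exactly, pathwise and before any conditioning. With that observation only the $a_2$ array needs to be treated. Its row-sum entries $a_2(X_i,X_k)$ with $k\ne j$ have conditional mean zero, and in its grand sum only the two $(i,j)$ orderings survive. This reduces the whole computation to the single coefficient displayed in the $a_2$ item above.
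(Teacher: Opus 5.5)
Your proof is correct and follows the paper's route: condition the two row sums and the ordered grand sum on $(X_i,X_j)$ (your $a_0+a_1(X_i)$ is the paper's $m_a(X_i)$) and substitute into \eqref{eq:pairwise-u}. Your annihilation cross-check is the $r=2$ case of the paper's argument for Corollary~\ref{cor:general-conditional}, where the surviving coefficient is the diagonal projector entry $d_{n,2}/\binom n2=(n-3)/(n-1)$. In a final write-up, delete the first attempt at the $a_0$ coefficient. It omits the $a_0$ carried by $A_{ij}$ inside the row sums and the grand sum; your recount is right.
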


\noindent The proof is given in
Appendix~\ref{app:proof-pair-conditional}.

\subsection{Distance covariance and HSIC}

For distance covariance, take
\[
 a(x,x')=\norm{x-x'},\qquad b(y,y')=\norm{y-y'}.
\]
Then $a_2$ and $b_2$ are the population-centered distance kernels and
$\Theta_2(a,b)=\operatorname{dCov}^2(X,Y)$.  Under the sufficient moment
condition $\E\norm X^2+\E\norm Y^2<\infty$,
\eqref{eq:pair-compression} is the usual unbiased squared sample distance
covariance \citep{SzekelyRizzoBakirov2007,SzekelyRizzo2014}.

For HSIC, let $a=k$ and $b=\ell$ be positive definite kernels.  The empirical
arrays are the hollowed Gram matrices
\[
 A_{ij}=k(X_i,X_j)\mathbf 1\{i\ne j\},\qquad
 B_{ij}=\ell(Y_i,Y_j)\mathbf 1\{i\ne j\}.
\]
Then $\Theta_2(k,\ell)$ is the population HSIC, and
\eqref{eq:pair-compression} gives its unbiased U-statistic estimator
\citep{GrettonEtAl2008,SongEtAl2012}.  Bounded kernels automatically satisfy
the standing second-moment condition.  The correspondence between distance
and kernel formulations is discussed by \citet{SejdinovicEtAl2013}.

\section{Regression on pair arrays after U-centering}
\label{sec:edge-regression}

U-centering has two complementary uses.  First, the normalized cross-product
of two residual arrays is an unbiased estimator of a population Hoeffding
pairing, as shown in Section~\ref{sec:pairwise-hoeffding}.  Second, the
same construction gives a regression interpretation.  Treat the entry of one
pair array on each edge as the response and the corresponding entry of a
second array as the predictor.  U-centering both arrays first removes additive
endpoint effects; the slope is then fitted from the residual entries, with one
observation for each unordered pair.

\subsection{Single-predictor regression after residualization}

Let $y,x\in\R^M$ be the response and predictor vectors, let
$W\in\R^{M\times k}$ be a nuisance design matrix of rank $k$, and let $P_W$
be the orthogonal projector onto
$\operatorname{col}(W)^\perp$.  Thus $P_W$ removes from a vector the part
explained by $W$.  Write
\[
 d=\rank(P_W)=M-k.
\]
Provided the adjusted vectors $P_Wy$ and $P_Wx$ are nonzero, their correlation
is
\begin{equation}
 R_W(y,x)=
 \frac{y^\mathsf TP_Wx}
 {\{(y^\mathsf TP_Wy)(x^\mathsf TP_Wx)\}^{1/2}}.
 \label{eq:general-partial-edge-correlation}
\end{equation}

\begin{proposition}[Single-predictor residual regression]
\label{prop:general-edge-regression}
Suppose $d\ge2$, $P_Wx\ne0$, $P_Wy\ne0$, and $|R_W(y,x)|<1$.  In the linear model
\[
 y=W\gamma+\beta x+e,
\]
the ordinary least-squares statistic for testing $H_0:\beta=0$ is
\begin{equation}
 T_W=
 \frac{\sqrt{d-1}\,R_W(y,x)}
 {\{1-R_W(y,x)^2\}^{1/2}},
 \label{eq:general-edge-regression-t}
\end{equation}
and the full-model residual degrees of freedom are $d-1$.
\end{proposition}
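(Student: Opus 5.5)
The plan is to use the Frisch--Waugh--Lovell theorem and then express the standard OLS $t$-statistic through the partial correlation $R_W$. Let $X=[W,x]$ be the full design. Because $P_Wx\ne0$, $x$ is not in $\operatorname{col}(W)$, so $X$ has rank $k+1$. The full-model residual degrees of freedom are therefore $M-k-1=d-1$, and $d\ge2$ makes this positive.

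By Frisch--Waugh--Lovell, the OLS coefficient on $x$ is
\[
\widehat\beta=\frac{x^\mathsf TP_Wy}{x^\mathsf TP_Wx},
\]
which is the slope from regressing $P_Wy$ on $P_Wx$. The full-model residual vector is
\[
\widehat e=P_Wy-\widehat\beta P_Wx.
\]
To see this, note that $\widehat e$ is orthogonal to $\operatorname{col}(W)$, since it lies in the range of $P_W$. It is also orthogonal to $x$, because $x^\mathsf TP_W\widehat e=0$ by the choice of $\widehat\beta$. Finally, $y-\widehat e$ lies in $\operatorname{col}(X)$. Together these give the residual sum of squares
\[
\mathrm{RSS}=y^\mathsf TP_Wy-\frac{(x^\mathsf TP_Wy)^2}{x^\mathsf TP_Wx}=y^\mathsf TP_Wy\,\{1-R_W(y,x)^2\}.
\]
This is strictly positive because $|R_W|<1$ and $P_Wy\ne0$, so $\widehat\sigma^2=\mathrm{RSS}/(d-1)$ is well defined and positive.

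Next I need the variance factor. The $(\beta,\beta)$ entry of $(X^\mathsf TX)^{-1}$ equals $1/(x^\mathsf TP_Wx)$, which follows from the block (Schur complement) inverse formula. Hence
\[
\operatorname{se}(\widehat\beta)^2=\frac{\widehat\sigma^2}{x^\mathsf TP_Wx}.
\]
Substituting,
\[
T_W=\frac{\widehat\beta}{\operatorname{se}(\widehat\beta)}=\frac{x^\mathsf TP_Wy/\sqrt{x^\mathsf TP_Wx}}{\sqrt{y^\mathsf TP_Wy(1-R_W^2)/(d-1)}},
\]
and this simplifies to \eqref{eq:general-edge-regression-t}.

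The main care point is the Schur complement step together with the justification of the residual vector. Both are standard, so I expect no real obstacle beyond keeping track of the nondegeneracy conditions.
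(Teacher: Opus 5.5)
Your proposal is correct and follows the same route as the paper. The paper supports the proposition only with a brief appeal to the Frisch--Waugh--Lovell theorem: residualize $y$ and $x$ on $W$, fit the slope of $P_Wy$ on $P_Wx$ through the origin, and use the usual single-slope standard error. Your verification of the full-model residual vector and your Schur-complement computation of the variance factor $1/(x^\mathsf TP_Wx)$ fill in the details the paper leaves implicit, including why the residual degrees of freedom are $d-1$.
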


This is the Frisch--Waugh--Lovell theorem in correlation form.  First remove
from $y$ and $x$ the parts explained by $W$, leaving $P_Wy$ and $P_Wx$.
Then regress the adjusted response on the adjusted predictor through the
origin and use the usual standard error for a single slope.

\subsection{Dependence as a slope after endpoint residualization}

Let $n\ge4$ and $A,B\in\cV_{n,2}$, with edge vectorizations $a_E$ and
$b_E$, and let $Z$ be the unsigned edge-by-vertex incidence matrix from
Section~\ref{sec:pairwise-anova}.  Consider
\begin{equation}
 b_E=Z\gamma+\beta a_E+e.
 \label{eq:dyadic-regression}
\end{equation}
The term $Z\gamma$ gives each sample label its own additive effect.  Changing
$\gamma_i$ shifts every pair value that contains label $i$.  No separate
intercept is needed: adding the same constant to every endpoint effect shifts
every fitted edge value by a common amount.
By Theorem~\ref{thm:pair-anova}, projecting away these endpoint effects is
exactly the same as U-centering $A$ and $B$.
Suppose that $\widetilde A$ and $\widetilde B$ are nonzero.  Their partial
correlation is then
\begin{equation}
 R_U(A,B)=
 \frac{\inner{\widetilde A}{\widetilde B}_E}
 {\norm{\widetilde A}_E\norm{\widetilde B}_E}.
 \label{eq:u-residual-correlation}
\end{equation}
The residual space after fitting the endpoint effects has dimension
\[
 d_{n,2}=\rank(M_{n,2})=\binom n2-n=\frac{n(n-3)}2,
\]
so, if $|R_U(A,B)|<1$, Proposition~\ref{prop:general-edge-regression} gives
\begin{equation}
 T_{\mathrm{dep}}
 =\frac{\sqrt{d_{n,2}-1}\,R_U(A,B)}
 {\{1-R_U(A,B)^2\}^{1/2}}.
 \label{eq:edge-regression-t}
\end{equation}
We call $T_{\mathrm{dep}}$ the U-centered dependence $t$-statistic.  By
construction, it is the ordinary slope $t$-statistic in
\eqref{eq:dyadic-regression}.  When $A$ and $B$ are Euclidean distance arrays,
$R_U(A,B)$ is the bias-corrected sample distance correlation $R_n^*$, and
$T_{\mathrm{dep}}$ is the corresponding distance-correlation $t$-statistic of
\citet{SzekelyRizzo2013}.
\citet{ZhuZhangYaoShao2020} extend the same studentization to distance- and
reproducing kernel Hilbert space (RKHS)-based dependence measures.  This slope
regression has the residual degrees-of-freedom count
\[
 \binom n2-n-1:
 \qquad
 \text{pair observations}-\text{endpoint effects}-\text{one slope}.
\]

\subsection{The generalized-energy statistic as a slope on the between-group pair indicator}

Let $P$ and $Q$ be probability distributions on a common space $\mathcal Z$.
Let $X,X'$ be independent draws from $P$ and let $Y,Y'$ be independent draws
from $Q$, with all four random variables mutually independent.  For a
symmetric measurable dissimilarity $\rho:\mathcal Z^2\to\R$ for which the
expectations below are finite, define the population generalized-energy
contrast by
\[
 \mathcal E_\rho(P,Q)
 =2\E\rho(X,Y)-\E\rho(X,X')-\E\rho(Y,Y').
\]
The ordinary energy distance is the special case $\rho(x,y)=\norm{x-y}$.

Let $X_1,\ldots,X_n$ and $Y_1,\ldots,Y_m$, with $n,m\ge2$, be independent
samples from $P$ and $Q$, respectively.  Pool the observations as
\[
 U_1=X_1,\ldots,U_n=X_n,
 \qquad
 U_{n+1}=Y_1,\ldots,U_{n+m}=Y_m,
\]
and put $N=n+m$.  Set $G_i=1$ for $i\le n$ and $G_i=0$ for $i>n$ to record
group membership.  For $i<j$, define
\begin{equation}
 C_{ij}=\mathbf 1\{G_i\ne G_j\}=(G_i-G_j)^2,
 \qquad
 A_{ij}=\rho(U_i,U_j),
 \label{eq:between-group-pair-indicator}
\end{equation}
and let $a_E=\operatorname{vec}_E(A)$ and
$c_E=\operatorname{vec}_E(C)$.  Thus $c_E$ indicates which pairs are
between-group pairs.

The following three block averages give an unbiased sample version of the
population contrast:
\[
 \overline A_{XY}=\frac1{nm}\sum_{i\le n<j}A_{ij},
 \quad
 \overline A_{XX}=\frac{2}{n(n-1)}\sum_{i<j\le n}A_{ij},
 \quad
 \overline A_{YY}=\frac{2}{m(m-1)}\sum_{n<i<j\le N}A_{ij},
\]
and, for this fixed choice of $\rho$, the sample generalized-energy statistic is
\[
 \mathcal E_{n,m}=2\overline A_{XY}-\overline A_{XX}-\overline A_{YY}.
\]
This formula uses the dissimilarity convention, under which large cross-group
values contribute positively.  If a positive-definite kernel $k$ is used
directly as a similarity, define $\overline k_{XX}$, $\overline k_{XY}$, and
$\overline k_{YY}$ by replacing $A_{ij}$ with $k(U_i,U_j)$ in the three block
averages above.  The usual unbiased estimator of squared maximum mean
discrepancy (MMD) then has the opposite sign:
\[
 \widehat{\operatorname{MMD}}_{u,k}^2
 =\overline k_{XX}+\overline k_{YY}-2\overline k_{XY}
 =-\{2\overline k_{XY}-\overline k_{XX}-\overline k_{YY}\}.
\]
Thus, if $A_{ij}=k(U_i,U_j)$, the regression slope below is
$-\widehat{\operatorname{MMD}}_{u,k}^2/2$ under the kernel-similarity
convention \citep{SejdinovicEtAl2013}.

To express this contrast as a regression coefficient, define the
block-constant edge vector
$\ell_E=(\ell_{ij}:1\le i<j\le N)$ by
\begin{equation}
 \begin{gathered}
 \ell_{ij}=
 \begin{cases}
 -\{n(n-1)\}^{-1},&1\le i<j\le n,\\[1mm]
 (nm)^{-1},&1\le i\le n<j\le N,\\[1mm]
 -\{m(m-1)\}^{-1},&n<i<j\le N,
 \end{cases}\\[2mm]
 a_{n,m}:=\norm{\ell_E}>0,
 \qquad
 a_{n,m}^2
 =\frac1{nm}+\frac1{2n(n-1)}+\frac1{2m(m-1)}.
 \end{gathered}
 \label{eq:energy-edge-contrast-vector}
\end{equation}
Let $Z_N\in\R^{\binom{N}{2}\times N}$ be the unsigned edge-by-vertex
incidence matrix for the complete graph on the $N$ pooled labels, and let
$M_{N,2}$ be its residual projector,
defined as in \eqref{eq:pair-projection}.  The common-endpoint edge regression is
\begin{equation}
 a_E=Z_N\gamma+\theta c_E+e.
 \label{eq:simple-pooled-group-regression}
\end{equation}

\begin{proposition}[The generalized-energy statistic is a slope on the between-group pair indicator]
\label{prop:energy-group-slope}
The residualized between-group pair indicator satisfies
\begin{equation}
 M_{N,2}c_E=\frac{\ell_E}{a_{n,m}^2}.
 \label{eq:ucentered-group-indicator}
\end{equation}
Consequently, the least-squares slope in
\eqref{eq:simple-pooled-group-regression} is
\begin{equation}
 \widehat\theta
 =\ell_E^\mathsf Ta_E
 =\overline A_{XY}-\frac{\overline A_{XX}+\overline A_{YY}}2
 =\frac{\mathcal E_{n,m}}2.
 \label{eq:energy-as-group-slope}
\end{equation}
Equivalently, if $\widetilde A$ and $\widetilde C$ denote global U-centering
on all $N$ labels, then
\begin{equation}
 \mathcal E_{n,m}
 =2a_{n,m}^2\inner{\widetilde A}{\widetilde C}_E.
 \label{eq:energy-as-ucentered-label-covariance}
\end{equation}
\end{proposition}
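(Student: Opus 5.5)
The plan is to verify \eqref{eq:ucentered-group-indicator} through the characterization in Corollary~\ref{cor:pair-basic}. That corollary says $M_{N,2}c_E$ is the unique edge vector $h$ with zero vertex margins, $Z_N^\mathsf T h=0$, and with $c_E-h\in\operatorname{col}(Z_N)$. So it suffices to show two things about $h=\ell_E/a_{n,m}^2$.

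\emph{Zero margins.} For a label $i\le n$, the edges at $i$ are $n-1$ within-$X$ edges and $m$ between edges. Its margin is therefore
\[
(n-1)\cdot\bigl(-\{n(n-1)\}^{-1}\bigr)+m\cdot(nm)^{-1}=-\tfrac1n+\tfrac1n=0.
\]
The case $i>n$ is symmetric, so $Z_N^\mathsf T\ell_E=0$.

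\emph{Difference lies in the additive space.} We need $c_E-\ell_E/a_{n,m}^2$ to have the additive form $u_i+u_j$. Because this vector is block-constant, I will try $u_i=p$ for $i\le n$ and $u_i=q$ for $i>n$. This requires three equations:
\[
2p=\frac{1}{a^2n(n-1)},\qquad 2q=\frac{1}{a^2m(m-1)},\qquad p+q=1-\frac{1}{a^2nm}.
\]
Here $a=a_{n,m}$. The first two fix $p$ and $q$, and the third then becomes
\[
\frac{1}{2n(n-1)}+\frac{1}{2m(m-1)}=a^2-\frac1{nm}.
\]
This is exactly the stated formula for $a_{n,m}^2$, so the system is consistent. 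The main obstacle is only this bookkeeping, together with checking that the stated expression for $a_{n,m}^2$ equals $\norm{\ell_E}^2$. That check is a direct count: $nm$ between edges, $\binom n2$ within-$X$ edges, and $\binom m2$ within-$Y$ edges, each with its squared value. The same count confirms the formula, which is also consistent with $\ell_E^\mathsf T c_E=1$.

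\emph{Slope.} By the Frisch--Waugh--Lovell reasoning behind Proposition~\ref{prop:general-edge-regression}, the slope in \eqref{eq:simple-pooled-group-regression} is
\[
\widehat\theta=\frac{c_E^\mathsf TM_{N,2}a_E}{c_E^\mathsf TM_{N,2}c_E}.
\]
Substituting $M_{N,2}c_E=\ell_E/a^2$ gives numerator $\ell_E^\mathsf Ta_E/a^2$ and denominator $c_E^\mathsf T\ell_E/a^2=1/a^2$, using $c_E^\mathsf T\ell_E=nm\cdot(nm)^{-1}=1$. Hence $\widehat\theta=\ell_E^\mathsf Ta_E$. Reading this off blockwise gives
\[
\ell_E^\mathsf Ta_E=\overline A_{XY}-\tfrac12\overline A_{XX}-\tfrac12\overline A_{YY},
\]
because $\{n(n-1)\}^{-1}\sum_{i<j\le n}A_{ij}=\overline A_{XX}/2$, and similarly for $Y$. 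This equals $\mathcal E_{n,m}/2$.

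\emph{U-centered form.} Finally, Theorem~\ref{thm:pair-anova} together with self-adjointness and idempotence gives
\[
\inner{\widetilde A}{\widetilde C}_E=a_E^\mathsf TM_{N,2}c_E=\ell_E^\mathsf Ta_E/a^2=\mathcal E_{n,m}/(2a^2),
\]
which rearranges to \eqref{eq:energy-as-ucentered-label-covariance}.
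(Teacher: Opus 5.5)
Your proposal is correct and follows essentially the same route as the paper. The paper also shows $Z_N^\mathsf T\ell_E=0$, exhibits the block-constant endpoint vector ($u_X,u_Y$ there, your $p,q$) with $c_E-\ell_E/a_{n,m}^2=Z_Nu$, and then applies the Frisch--Waugh--Lovell slope formula together with $\inner{\widetilde A}{\widetilde C}_E=a_E^\mathsf TM_{N,2}c_E$. The only cosmetic difference is that you invoke the uniqueness characterization in Corollary~\ref{cor:pair-basic}, whereas the paper phrases the same step as an orthogonal decomposition of $c_E$ into $\range(M_{N,2})$ and $\operatorname{col}(Z_N)$.
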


The fitted slope compares average between-group and within-group
dissimilarities after additive endpoint effects have been removed.  The
explicit projection of $c_E$ is proved in
Appendix~\ref{app:two-sample-nesting}.

\subsection{Restricted and fully interacted studentizations}

We now compare two studentizations, meaning two ways of dividing the fitted
slope by an estimated standard error.
In the common-endpoint model \eqref{eq:simple-pooled-group-regression}, each
pooled observation has the same endpoint effect in its within- and
between-group edges.  After fitting the generalized-energy slope
$\widehat\theta$, the residual sum of
squares and its degrees of freedom are
\begin{equation}
 \mathrm{RSS}_R
 =a_E^\mathsf TM_{N,2}a_E-\frac{\widehat\theta^2}{a_{n,m}^2},
 \qquad
 \nu_R=\binom N2-N-1.
 \label{eq:restricted-two-sample-rss}
\end{equation}
Provided $\mathrm{RSS}_R>0$, the slope statistic for this common-endpoint
model is
\begin{equation}
 T_R=\frac{\widehat\theta}
 {a_{n,m}\sqrt{\mathrm{RSS}_R/\nu_R}}.
 \label{eq:restricted-energy-t}
\end{equation}
We call the regression the restricted model and the resulting standardization
of $\widehat\theta$ the restricted studentization.  A closely related pooled
studentization is due to \citet{GaoShao2023};
Remark~\ref{rem:gao-shao} gives the exact relation.

For the remainder of this subsection, assume $n,m\ge4$.  We now introduce a
fully interacted regression whose slope statistic
reproduces the pooled studentization of \citet{ChakrabortyZhang2021}.  This
regression uses a more flexible model for endpoint effects.  Put
$Q_{ij}=G_i+G_j-1$, which equals $1$, $0$, and $-1$ on $XX$, $XY$, and $YY$
edges, respectively, and fit
\begin{equation}
 \begin{split}
 A_{ij}={}&\eta+\delta Q_{ij}+\theta C_{ij}\\
 &+(1-C_{ij})(\alpha_i+\alpha_j)
   +C_{ij}(\beta_i+\beta_j)+e_{ij},
 \qquad i<j.
 \end{split}
 \label{eq:stacked-two-sample-edge-model}
\end{equation}
The three block intercepts are $\eta+\delta$ for $XX$, $\eta+\theta$ for
$XY$, and $\eta-\delta$ for $YY$.  Thus $\delta$ distinguishes the two
within-group blocks, while $\theta$ compares the between-group block with
their average.
For identifiability, the $\alpha$ effects sum to zero separately in the two
groups, and so do the $\beta$ effects.  The $\alpha_i$ are endpoint effects
for within-group edges, whereas the $\beta_i$ are endpoint effects for
between-group edges.  Thus this model allows the effect of an observation to
depend on the type of edge in which it appears.

Partition $A$ into two symmetric hollow within-group arrays and one
rectangular between-group array:
\[
 A^{XX}=(A_{ij})_{1\le i,j\le n},\qquad
 A^{XY}=(A_{i,n+j})_{\substack{1\le i\le n\\1\le j\le m}},\qquad
 A^{YY}=(A_{n+i,n+j})_{1\le i,j\le m}.
\]
Also define
\[
 \Pi_s=I_s-\frac1s\one_s\one_s^\mathsf T.
\]
Write $\norm{\cdot}_F$ for the Frobenius norm.

\begin{proposition}[Nested edge regressions for two-sample homogeneity]
\label{prop:energy-anova-t}
Let $n,m\ge4$.  In model \eqref{eq:stacked-two-sample-edge-model}:
\begin{enumerate}
\item the slope is again $\widehat\theta=\mathcal E_{n,m}/2$;
\item the residual sum of squares is
\begin{equation}
 \mathrm{RSS}_F=\mathrm{RSS}_{XY}+\mathrm{RSS}_{XX}+\mathrm{RSS}_{YY},
 \label{eq:stacked-full-rss}
\end{equation}
where
\begin{align*}
 \mathrm{RSS}_{XY}
 &=\norm{\Pi_nA^{XY}\Pi_m}_F^2,\\
 \mathrm{RSS}_{XX}
 &=\operatorname{vec}_E(A^{XX})^\mathsf T
   M_{n,2}\operatorname{vec}_E(A^{XX}),\\
 \mathrm{RSS}_{YY}
 &=\operatorname{vec}_E(A^{YY})^\mathsf T
   M_{m,2}\operatorname{vec}_E(A^{YY});
\end{align*}
\item the residual degrees of freedom are
\begin{equation}
 \nu_F
 =(n-1)(m-1)+\frac{n(n-3)}2+\frac{m(m-3)}2
 =\binom N2-2N+1;
 \label{eq:stacked-two-sample-df}
\end{equation}
\item let $S_{n,m}$ denote the pooled variance estimator of
\citet{ChakrabortyZhang2021}, whose blockwise formula is reproduced in
\eqref{eq:cz-pooled-variance}.  It satisfies
$S_{n,m}=4\mathrm{MS}_F$, where
$\mathrm{MS}_F=\mathrm{RSS}_F/\nu_F$.  Consequently, provided
$\mathrm{RSS}_F>0$, the ordinary slope statistic is
\begin{equation}
 T_F=\frac{\widehat\theta}{a_{n,m}\sqrt{\mathrm{MS}_F}}
 =\frac{\mathcal E_{n,m}}{a_{n,m}\sqrt{S_{n,m}}},
 \label{eq:energy-anova-t}
\end{equation}
which, under the same dissimilarity convention, is
exactly their studentized statistic;
\item the restricted model is nested in the fully interacted model and
\begin{equation}
 \mathrm{RSS}_R=\mathrm{RSS}_F+\mathrm{SS}_{\mathrm{int}},
 \qquad
 \mathrm{SS}_{\mathrm{int}}\ge0,
 \qquad
 \nu_R-\nu_F=N-2,
 \label{eq:nested-rss-decomposition}
\end{equation}
where $\mathrm{SS}_{\mathrm{int}}$ is the extra sum of squares explained by
the endpoint-by-pair-type interactions.
\end{enumerate}
\end{proposition}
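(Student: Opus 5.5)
The plan is to decompose the design of model \eqref{eq:stacked-two-sample-edge-model} into orthogonal blocks indexed by pair type. The edge set splits into three disjoint blocks, $XX$, $XY$ and $YY$. Every regressor in the full model is supported on either the within-group edges or the between-group edges, except the block intercepts. The block intercepts $\eta+\delta$, $\eta+\theta$ and $\eta-\delta$ are a reparametrization of three free block means. Consequently, the column space of the full design equals the direct sum $\mathcal S_{XX}\oplus\mathcal S_{XY}\oplus\mathcal S_{YY}$, where each summand is supported on one block:
\begin{itemize}
\item $\mathcal S_{XX}$ is spanned by the $XX$-block constant and the restricted $\alpha_i$ columns for $i\le n$, which is the same as the span of the $n$ incidence columns of the complete graph on the $X$ labels;
\item $\mathcal S_{YY}$ is the analogous space on the $m$ $Y$ labels;
\item $\mathcal S_{XY}$ is spanned by the constant and the $\beta$ row and column effects on the bipartite $n\times m$ block.
\end{itemize}
The $\alpha$ effects in $XX$ and in $YY$ are separate parameters, so the $\alpha$ effects do not couple the two within blocks. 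The full residual projector is therefore block diagonal. On $XX$ it is $M_{n,2}$ (Theorem~\ref{thm:pair-anova}), on $YY$ it is $M_{m,2}$, and on $XY$ it is the two-way additive projector $B\mapsto \Pi_nB\Pi_m$. Part 2 follows directly from this. Part 3 follows from the ranks $n(n-3)/2$ (Corollary~\ref{cor:pair-basic}), $m(m-3)/2$, and $(n-1)(m-1)$. Checking the identity $\binom N2-2N+1$ is routine algebra.

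For part 1, I would use the fact that $\theta$ is identified only through the block means. The fitted value on each block is the block projection of $a_E$. The coefficient $\theta$ then equals the fitted $XY$ mean minus the average of the $XX$ and $YY$ means, because $(\eta+\delta+\eta-\delta)/2=\eta$. Each block space contains its block constant, so the least-squares fit reproduces the block means $\overline A_{XY}$, $\overline A_{XX}$ and $\overline A_{YY}$ exactly: the normal equation for the block constant forces the residuals in each block to sum to zero. The sum-to-zero identifiability constraints on $\alpha$ and $\beta$ make the block means equal to the intercepts. This gives $\widehat\theta=\overline A_{XY}-(\overline A_{XX}+\overline A_{YY})/2=\mathcal E_{n,m}/2$. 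The same argument gives the variance factor. The contrast vector for $\theta$ is orthogonal to all non-intercept columns, and it is exactly $\ell_E$ from \eqref{eq:energy-edge-contrast-vector}. Hence $\Var\widehat\theta=\sigma^2\norm{\ell_E}^2=\sigma^2a_{n,m}^2$, which produces the form of $T_F$ in \eqref{eq:energy-anova-t}.

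For part 4, I would write each of the three RSS terms in the entrywise form of \eqref{eq:pair-contraction}:
\begin{itemize}
\item $\mathrm{RSS}_{XX}$ as $\tfrac12\sum_{i\ne j}(\widetilde{A^{XX}})_{ij}^2$;
\item $\mathrm{RSS}_{YY}$ in the same way on the $Y$ labels;
\item $\mathrm{RSS}_{XY}$ as the squared double-centered between-group array.
\end{itemize}
I would then compare these term by term with the blockwise formula \eqref{eq:cz-pooled-variance} of \citet{ChakrabortyZhang2021}. I expect this to be the main obstacle, because it requires matching their normalizations, and the factor $4$ must come out exactly. My approach is to show that their within-group variance pieces are $n(n-3)$-normalized U-centered squared norms and that their cross piece is an $(n-1)(m-1)$-normalized double-centered norm, both pooled by degrees of freedom. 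If so, $S_{n,m}=4\,\mathrm{RSS}_F/\nu_F$. Part 5 follows from nesting. Setting $\alpha_i=\beta_i$ and adjusting the intercepts recovers the common-endpoint model, so the restricted column space is contained in the full one. The Pythagorean identity then gives $\mathrm{RSS}_R=\mathrm{RSS}_F+\mathrm{SS}_{\mathrm{int}}$ with $\mathrm{SS}_{\mathrm{int}}\ge0$. Finally, $\nu_R-\nu_F=(\binom N2-N-1)-(\binom N2-2N+1)=N-2$.
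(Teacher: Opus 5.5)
Your proposal is correct and follows essentially the same route as the paper's proof. The least-squares criterion separates over the disjoint $XX$, $XY$ and $YY$ blocks, and the zero-sum constraints make the fitted block intercepts equal the block averages, so $\widehat\theta=\ell_E^\mathsf Ta_E$ with variance factor $\norm{\ell_E}^2=a_{n,m}^2$. The block residuals come from $M_{n,2}$, $\Pi_n(\cdot)\Pi_m$ and $M_{m,2}$, with the ordered-pair factor of two absorbed when matching \eqref{eq:cz-pooled-variance}. Nesting follows by splitting each $\gamma_i$ into its group mean plus a deviation (so $\alpha_i=\beta_i=d_i$) and then applying the Pythagorean identity. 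The only cosmetic difference is that you obtain $\nu_R-\nu_F=N-2$ by subtracting the two stated degrees of freedom, whereas the paper compares the fitted ranks $N+1$ and $2N-1$.
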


Thus the two procedures differ only in the nuisance fit used to form the
residual mean square.

\begin{remark}[Connection with Gao and Shao]
\label{rem:gao-shao}
Set $d=d_{N,2}$ and $Q_0=a_E^\mathsf TM_{N,2}a_E$.  Under the
constant-diagonal condition of \citet{GaoShao2023}, and after aligning the
dissimilarity convention, let $\widetilde A$ be the globally U-centered
hollow pooled array and write $a_0^k$ for the common diagonal value.  Their
centered off-diagonal entries equal
$\widetilde A_{st}-a_0^k/(N-1)$, and their explicit diagonal correction
cancels the squared common shift.  Their pooled estimator therefore reduces
to
\[
 \frac{1}{N(N-3)}\sum_{s\ne t}\widetilde A_{st}^{\,2}
 =\frac{Q_0}{d},
\]
because the ordered-pair sum equals $2Q_0$ and $N(N-3)=2d$.
If $Q_0>0$, their studentized statistic can therefore be written as
\[
 T_{\mathrm{GS}}
 =\frac{\widehat\theta}{a_{n,m}\sqrt{Q_0/d}}.
\]
Because $\mathrm{RSS}_R=Q_0-\widehat\theta^2/a_{n,m}^2$ and
$\nu_R=d-1$, whenever $\mathrm{RSS}_R>0$,
\[
 T_R=T_{\mathrm{GS}}
 \left\{\frac{d-1}{d-T_{\mathrm{GS}}^2}\right\}^{1/2}.
\]
Thus the two statistics are signed monotone transformations and give the same
two-sided ordering.  Gao and Shao use the residual mean square before fitting
the between-group slope, whereas $T_R$ uses the ordinary residual mean square
after the slope has been fitted.
\end{remark}

\begin{remark}[Fixed-sample, high-dimensional limits]
When the sample sizes are fixed and the ambient dimension grows,
\citet{ZhuZhangYaoShao2020} and \citet{ChakrabortyZhang2021} show, under their
respective null hypotheses and assumptions, that $T_{\mathrm{dep}}$ and $T_F$
converge to Student $t$ distributions.
\end{remark}

\section{Higher-order U-centering}
\label{sec:higher-order-u-centering}

We now extend pairwise residualization to arrays indexed by $r$-subsets.  The
endpoint effects are replaced by all lower-order subset effects, and the zero
row sums become zero $(r-1)$-way margins.

\subsection{\texorpdfstring{{The hierarchy for triple-indexed arrays}}{The hierarchy for triple-indexed arrays}}

Assume $n\ge6$, and consider a symmetric array $A_{ijk}$ indexed by unordered
triples of distinct labels.  Its lower-order hierarchical additive fit is
\[
 A_{ijk}=\mu+\alpha_i+\alpha_j+\alpha_k
 +\beta_{ij}+\beta_{ik}+\beta_{jk}+\varepsilon_{ijk}.
\]
The parametrization is redundant: individual coefficients are not unique,
but the fitted array and its least-squares residual are.  Suitable linear
constraints may be imposed to select a unique parametrization without
changing either quantity.

Here $\varepsilon_{ijk}$ denotes the deterministic least-squares residual,
not a stochastic error term.  Its normal equations are the zero pair-margin
conditions
\[
 \sum_{k\notin\{i,j\}}\varepsilon_{ijk}=0
 \qquad\text{for every }\{i,j\}\in\binom{[n]}2.
\]
Thus U-centering for triple-indexed arrays removes every effect involving fewer
than three labels.  The condition $n\ge6$ is the $r=3$ specialization of the
general condition $n\ge2r$.

\subsection{Arrays, margins, and the lower-order ANOVA space}

Fix $r\ge1$ and $n\ge2r$, and let
\[
 \cI_{n,r}=\binom{[n]}r,
 \qquad
 \cV_{n,r}=\R^{\cI_{n,r}}.
\]
An element $A\in\cV_{n,r}$ is an $r$-subset array evaluated only at
distinct sample labels.  For $A,B\in\cV_{n,r}$, define the inner product by
  \begin{equation*}
 \inner{A}{B}=\sum_{I\in\cI_{n,r}}A_I B_I.
\end{equation*}
For $0\le j\le r$, with $\binom{[n]}0=\{\varnothing\}$, define the margin map
$D_{j,r}:\cV_{n,r}\to\R^{\binom{[n]}{j}}$ by
  \begin{equation*}
 (D_{j,r}A)(J)
 =\sum_{\substack{I\in\cI_{n,r}\\I\supseteq J}}A_I,
 \qquad J\in\binom{[n]}j.
\end{equation*}
For $j=0$, this is the grand sum $D_{0,r}A=\sum_I A_I$.  If
$f:\binom{[n]}j\to\R$, the adjoint map is
  \begin{equation*}
 (D_{j,r}^\ast f)(I)
 =\sum_{\substack{J\subseteq I\\|J|=j}}f(J).
\end{equation*}
Thus $D_{j,r}$ collects $j$-set margins, whereas $D_{j,r}^\ast$ spreads a
function on $j$-sets back to the $r$-sets containing them.

The full lower-order ANOVA fit is
\begin{equation}\label{eq:hyper-anova}
 A_I
 =\sum_{j=0}^{r-1}\ \sum_{\substack{J\subseteq I\\|J|=j}}
   \alpha_J+\varepsilon_I.
\end{equation}
Although this parametrization is redundant, its fitted-value space is
  \begin{equation*}
 \cL_{n,r}=\range(D_{r-1,r}^\ast).
\end{equation*}
To see this, let $f$ be a function on $j$-sets with $j<r-1$.  Then
  \begin{equation*}
 D_{r-1,r}^\ast D_{j,r-1}^\ast f
 =(r-j)D_{j,r}^\ast f.
\end{equation*}
For a fixed $r$-set $I$, each $j$-subset $J\subseteq I$ is contained in
exactly $r-j$ of the $(r-1)$-subsets of $I$, which proves the identity.
Therefore, all lower-order fitted effects are contained in
$\range(D_{r-1,r}^\ast)$.  Conversely, the $j=r-1$ term in
\eqref{eq:hyper-anova} spans this range.  Hence the full fitted-value space is
exactly $\cL_{n,r}=\range(D_{r-1,r}^\ast)$.  This nesting property of inclusion
matrices is also discussed in \citet{Gottlieb1966}.  The residual, or pure
$r$-way interaction, space is
  \[
 \cH_{n,r}=\cL_{n,r}^{\perp}=\ker(D_{r-1,r}).
\]
Indeed, since $\cL_{n,r}=\range(D_{r-1,r}^{\ast})$, for any
$H\in\cV_{n,r}$,
\[
\begin{aligned}
 H\in\cL_{n,r}^{\perp}
 &\quad\Longleftrightarrow\quad
 \inner{H}{D_{r-1,r}^{\ast}f}=0
 \quad\text{for every }f\in\R^{\binom{[n]}{r-1}}\\
 &\quad\Longleftrightarrow\quad
 \inner{D_{r-1,r}H}{f}=0
 \quad\text{for every }f
 \quad\Longleftrightarrow\quad
 D_{r-1,r}H=0.
\end{aligned}
\]
Hence $H\in\cH_{n,r}$ if and only if
  \[
 \sum_{i\notin J}H_{J\cup\{i\}}=0
 \qquad\text{for every }J\in\binom{[n]}{r-1}.
\]
\subsection{An explicit higher-order projection}
We seek coefficients $c_{n,r,j}$ for a linear combination of the margin
operators $D_{j,r}^\ast D_{j,r}$.  Since $D_{r,r}$ is the identity, set
$c_{n,r,r}=1$ to retain the raw entry, and choose the remaining coefficients
so that every $(r-1)$-margin vanishes.
The margin count carried out in
Appendix~\ref{app:higher-projection-proof} yields the recurrence
 \[
 c_{n,r,j+1}
 =-\frac{n-r-j+1}{r-j}c_{n,r,j},
 \qquad 0\le j<r.
\]
Solving this recurrence backward from $c_{n,r,r}=1$ gives
  \[
 c_{n,r,j}
 =\frac{(-1)^{r-j}}{\binom{n-r-j+1}{r-j}}.
\]
\begin{definition}[Higher-order U-centering]
\label{def:higher-u}
For $A\in\cV_{n,r}$, define
\begin{equation}\label{eq:higher-u-operator}
 \cU_{n,r}A
 =\sum_{j=0}^r c_{n,r,j}D_{j,r}^\ast D_{j,r}A.
\end{equation}
Equivalently, for $I\in\cI_{n,r}$,
\begin{equation}\label{eq:higher-u-entry}
 (\cU_{n,r}A)_I
 =\sum_{j=0}^r
 \frac{(-1)^{r-j}}{\binom{n-r-j+1}{r-j}}
 \sum_{\substack{J\subseteq I\\|J|=j}}
 \sum_{\substack{I'\in\cI_{n,r}\\I'\supseteq J}}A_{I'}.
\end{equation}
\end{definition}

For $r=1$, \eqref{eq:higher-u-entry} is ordinary sample centering.  For
$r=2$ and $n\ge4$, it becomes
\[
 (\cU_{n,2}A)_{ij}
 =A_{ij}-\frac{A_{i\cdot}+A_{j\cdot}}{n-2}
       +\frac{A_{\cdot\cdot}}{(n-1)(n-2)},
\]
which is pairwise U-centering.

\begin{samepage}
\begin{theorem}[Higher-order U-centering is the ANOVA projection]
\label{thm:higher-projection}
Let $n\ge2r$.  The operator $\cU_{n,r}$ is the orthogonal projection of
$\cV_{n,r}$ onto the residual space
$\cH_{n,r}=\ker(D_{r-1,r})$.  Equivalently, for every
$A\in\cV_{n,r}$, $\cU_{n,r}A$ is the least-squares residual obtained after
fitting every ANOVA effect involving fewer than $r$ labels.  In particular,
\begin{enumerate}
\item $D_{r-1,r}(\cU_{n,r}A)=0$ for every $A$;
\item $\cU_{n,r}A=A$ for every $A\in\cH_{n,r}$;
\item $\cU_{n,r}$ is self-adjoint and idempotent;
\item $\ker(\cU_{n,r})=\cL_{n,r}$; and
\item the residual degrees of freedom are given by
\begin{equation}\label{eq:general-df}
 d_{n,r}:=\dim(\cH_{n,r})
 =\binom nr-\binom n{r-1}
 =\binom nr\frac{n-2r+1}{n-r+1}.
\end{equation}
\end{enumerate}
\end{theorem}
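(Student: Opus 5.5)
The plan is to verify three properties of $\cU_{n,r}$: it is self-adjoint, it kills $\cL_{n,r}$, and its range lies in $\cH_{n,r}$. Together these show it is the orthogonal projector onto $\cH_{n,r}=\cL_{n,r}^\perp$. Items 1--4 then follow from the standard properties of orthogonal projectors, and the least-squares statement is just the definition of the residual from projecting onto $\cL_{n,r}^\perp$.

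\emph{Self-adjointness} is immediate from \eqref{eq:higher-u-operator}, since each $D_{j,r}^\ast D_{j,r}$ is self-adjoint and the coefficients $c_{n,r,j}$ are real.

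\emph{Zero margins.} I will compute $D_{r-1,r}D_{j,r}^\ast D_{j,r}A$ as a function on $(r-1)$-sets $K$. It equals $\sum_{I\supseteq K}\sum_{J\subseteq I,|J|=j}(D_{j,r}A)(J)$. Group by $J$ according to whether $J\subseteq K$ or $|J\setminus K|=1$:
\begin{itemize}
\item if $J\subseteq K$, then $J$ is counted once for each of the $n-r+1$ choices of $I$;
\item if $J\not\subseteq K$, then $I=K\cup J$ is forced.
\end{itemize}
I will then rewrite the second group in terms of $D_{j+1,r}$-type margins restricted to $K$. The key identity is that for $J'=J\cup\{i\}$ with $i\notin J$, the sum $\sum_{i\notin K}(D_{j,r}A)(J_0\cup\{i\})$ over $J_0\subseteq K$ with $|J_0|=j-1$ can be re-expressed through $(D_{j,r}A)$ and $(D_{j-1,r}A)$ via the count $\sum_{i\notin J_0}(D_{j,r}A)(J_0\cup\{i\})=(r-j+1)(D_{j-1,r}A)(J_0)$. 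This yields a two-term recursion, and the vanishing of the total is exactly the recurrence for $c_{n,r,j}$ quoted from Appendix~\ref{app:higher-projection-proof}. I regard this bookkeeping, getting the combinatorial coefficients right so that the telescoping closes, as the main obstacle. As a fallback, I will check it against $r=2$ using Theorem~\ref{thm:pair-anova}.

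\emph{Killing $\cL_{n,r}$.} Rather than verify this directly, I will use self-adjointness together with item~1. For $A$ arbitrary and $L=D_{r-1,r}^\ast f$,
\[
\inner{\cU_{n,r}L}{A}=\inner{L}{\cU_{n,r}A}=\inner{f}{D_{r-1,r}\cU_{n,r}A}=0,
\]
so $\cU_{n,r}L=0$.

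\emph{Identity on $\cH_{n,r}$.} For $H\in\cH_{n,r}$, we have $D_{r-1,r}H=0$. For $j<r-1$, writing $D_{j,r}=\frac{1}{r-j}D_{j,r-1}D_{r-1,r}$, which is the adjoint of the nesting identity already in the text, gives $D_{j,r}H=0$. Hence only the $j=r$ term survives, and $\cU_{n,r}H=H$.

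So $\cU_{n,r}$ is the identity on $\cH_{n,r}$, zero on $\cL_{n,r}=\cH_{n,r}^\perp$, and self-adjoint. It is therefore the orthogonal projector onto $\cH_{n,r}$, which gives idempotence, item~3, and $\ker=\cL_{n,r}$, item~4.

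\emph{Item 5.} Since $\dim\cH_{n,r}=\binom nr-\rank D_{r-1,r}^\ast$, I need $D_{r-1,r}^\ast$ to be injective, equivalently $D_{r-1,r}$ surjective, when $n\ge2r$, i.e.\ $r-1<r\le n-(r-1)$. This is Gottlieb's full-rank theorem for inclusion matrices \citep{Gottlieb1966}, which the paper cites. If a self-contained argument is preferred, I would show that $D_{r-1,r}D_{r-1,r}^\ast$ is positive definite using the Johnson-scheme eigenvalues $(r-i)(n-r-i+1)>0$ for $0\le i\le r-1$ when $n\ge2r$. Finally, the closed form $\binom nr\frac{n-2r+1}{n-r+1}$ is simple algebra using $\binom n{r-1}=\binom nr\frac{r}{n-r+1}$.
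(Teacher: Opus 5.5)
Your proposal is correct and follows essentially the same route as the paper's proof. The paper likewise gets self-adjointness from the $D_{j,r}^\ast D_{j,r}$ terms. It computes the $(r-1)$-margins through the same counting identity $\sum_{i\notin K}(D_{j,r}A)(K\cup\{i\})=(r-j+1)(D_{j-1,r}A)(K)$, minus the terms with $i$ inside the fixed $(r-1)$-set, which yields the recurrence for $c_{n,r,j}$. It shows that all lower margins vanish on $\cH_{n,r}$ via the nesting identity, and it cites Gottlieb's rank theorem for the dimension.

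The differences are cosmetic. You obtain $\cL_{n,r}\subseteq\ker(\cU_{n,r})$ directly by adjointness rather than reading it off the projection characterization. Your optional Johnson-scheme eigenvalues $(r-i)(n-r-i+1)$ of $D_{r-1,r}D_{r-1,r}^\ast$ are correct and give a self-contained substitute for the citation.
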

\end{samepage}

\noindent The proof is given in
Appendix~\ref{app:higher-projection-proof}.

\subsection{\texorpdfstring{{The explicit projection for triple-indexed arrays}}{The explicit projection for triple-indexed arrays}}

Assume $n\ge6$.  Let $A_{ijk}$ be symmetric and defined only for distinct
$i,j,k$.  The margins for a triple-indexed array are
\begin{align*}
 M_2(i,j)&=\sum_{k\notin\{i,j\}}A_{ijk},\\
 M_1(i)&=\sum_{\substack{j<k\\j,k\ne i}}A_{ijk},\\
 M_0&=\sum_{i<j<k}A_{ijk}.
\end{align*}
Then, for distinct $i,j,k$,
\begin{equation}\label{eq:r3-formula}
\begin{split}
 (\cU_{n,3}A)_{ijk}
 ={}&A_{ijk}
 -\frac{M_2(i,j)+M_2(i,k)+M_2(j,k)}{n-4}\\
 &+\frac{M_1(i)+M_1(j)+M_1(k)}{\binom{n-3}{2}}
 -\frac{M_0}{\binom{n-2}{3}}.
\end{split}
\end{equation}
The alternating terms follow the ANOVA hierarchy: retain the raw triple,
subtract the appropriately scaled pair margins, add back the scaled one-label
margins, and finally adjust for the grand margin.
The resulting pair-margin equations are the normal equations for the
hierarchical residual.  In particular, every pairwise slice has zero sum:
  \begin{equation*}
 \sum_{k\notin\{i,j\}}(\cU_{n,3}A)_{ijk}=0.
\end{equation*}
The residual degrees of freedom are
  \begin{equation*}
 d_{n,3}=\binom n3-\binom n2
 =\frac{n(n-1)(n-5)}{6}.
\end{equation*}

\subsection{Higher-order subset regression}
\label{sec:higher-order-subset-regression}

The pairwise construction extends by replacing edges with $r$-subsets and
endpoint effects with all lower-order subset effects.  Fix an ordering of
$\cI_{n,r}$ and identify
$A,B\in\cV_{n,r}$ with the coordinate vectors
$\mathbf a=(A_I:I\in\cI_{n,r})$ and
$\mathbf b=(B_I:I\in\cI_{n,r})$ in $\R^{\binom nr}$.  By
\eqref{eq:general-df}, $\dim(\cL_{n,r})=\binom n{r-1}$.  Choose a
full-column-rank matrix
\[
 W_{<r}\in\R^{\binom nr\times\binom n{r-1}}
\]
whose column space is the lower-order ANOVA space $\cL_{n,r}$.  For
$\gamma\in\R^{\binom n{r-1}}$, consider
\begin{equation}
 \mathbf b=W_{<r}\gamma+\beta\mathbf a+\mathbf e,
 \label{eq:higher-order-subset-regression}
\end{equation}
where $\mathbf e$ is the residual vector.  Under this coordinate
identification, $P_{W_{<r}}=\cU_{n,r}$ because both
are the orthogonal projector onto $\cL_{n,r}^{\perp}$.  Thus the
regression first U-centers both arrays, removing every effect involving fewer
than $r$ labels, and then fits the slope between the adjusted arrays.

\begin{corollary}[Single-predictor regression on $r$-subsets]
\label{cor:higher-order-subset-regression}
Let $n\ge2r$ and $d_{n,r}\ge2$.  Suppose $\cU_{n,r}A$ and
$\cU_{n,r}B$ are nonzero, and define
\begin{equation}
 R_{n,r}^{\mathrm{sub}}(A,B)
 =\frac{\inner{\cU_{n,r}A}{\cU_{n,r}B}}
 {\norm{\cU_{n,r}A}\norm{\cU_{n,r}B}}.
 \label{eq:higher-order-residual-correlation}
\end{equation}
If $|R_{n,r}^{\mathrm{sub}}(A,B)|<1$, the ordinary least-squares statistic
for testing $H_0:\beta=0$ in
\eqref{eq:higher-order-subset-regression} is
\begin{equation}
 T_{n,r}^{\mathrm{slope}}
 =\frac{\sqrt{d_{n,r}-1}\,R_{n,r}^{\mathrm{sub}}(A,B)}
 {\{1-(R_{n,r}^{\mathrm{sub}}(A,B))^2\}^{1/2}},
 \qquad
 d_{n,r}=\binom nr-\binom n{r-1},
 \label{eq:higher-order-subset-t}
\end{equation}
and its full-model residual degrees of freedom are $d_{n,r}-1$.
\end{corollary}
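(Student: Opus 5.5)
The corollary is a direct specialization of Proposition~\ref{prop:general-edge-regression}. I would take $M=\binom nr$, nuisance design $W=W_{<r}$ of rank $k=\binom n{r-1}$, response $y=\mathbf b$ and predictor $x=\mathbf a$. Then $d=M-k=d_{n,r}$ by \eqref{eq:general-df}, and the hypothesis $d_{n,r}\ge2$ is exactly the condition $d\ge2$ required there.

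The main step is to identify $P_{W_{<r}}$ with $\cU_{n,r}$ under the fixed coordinate ordering. By construction, $\operatorname{col}(W_{<r})=\cL_{n,r}$, so $P_{W_{<r}}$ is the orthogonal projector onto $\cL_{n,r}^\perp=\cH_{n,r}$. Theorem~\ref{thm:higher-projection} states that $\cU_{n,r}$ is the orthogonal projection onto $\cH_{n,r}$. Orthogonal projectors onto the same subspace coincide, so $P_{W_{<r}}=\cU_{n,r}$. The inner product $\inner{\cdot}{\cdot}$ on $\cV_{n,r}$ is the Euclidean inner product of the coordinate vectors, so no reweighting is involved.

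Next I would check the remaining hypotheses of Proposition~\ref{prop:general-edge-regression} and rewrite $R_W$. The nonvanishing conditions $P_Wx\ne0$ and $P_Wy\ne0$ are the assumptions that $\cU_{n,r}A$ and $\cU_{n,r}B$ are nonzero. Because $\cU_{n,r}$ is self-adjoint and idempotent (Theorem~\ref{thm:higher-projection}(3)), we have $y^\mathsf TP_Wx=\inner{\cU_{n,r}B}{\cU_{n,r}A}$ and $x^\mathsf TP_Wx=\norm{\cU_{n,r}A}^2$, and likewise for $y$. Hence $R_W(\mathbf b,\mathbf a)=R^{\mathrm{sub}}_{n,r}(A,B)$, which is symmetric in its two arguments. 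The condition $|R^{\mathrm{sub}}_{n,r}|<1$ is then the remaining hypothesis, and \eqref{eq:general-edge-regression-t} becomes \eqref{eq:higher-order-subset-t} with $d_{n,r}-1$ residual degrees of freedom.

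The only genuine obstacle is that Proposition~\ref{prop:general-edge-regression} is stated without proof in the paper. If a self-contained argument is wanted, I would prove it by Frisch--Waugh--Lovell. The OLS slope in $y=W\gamma+\beta x+e$ equals $\widehat\beta=(x^\mathsf TP_Wx)^{-1}x^\mathsf TP_Wy$, and the full-model RSS is $y^\mathsf TP_Wy(1-R_W^2)$ with $d-1$ degrees of freedom. The variance of $\widehat\beta$ is estimated by $(\mathrm{RSS}/(d-1))/(x^\mathsf TP_Wx)$. The ratio $\widehat\beta/\mathrm{se}(\widehat\beta)$ then simplifies to $\sqrt{d-1}\,R_W/(1-R_W^2)^{1/2}$. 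The conditions $d\ge2$ and $|R_W|<1$ ensure that the RSS is positive and the degrees of freedom are positive, so the statistic is well defined.
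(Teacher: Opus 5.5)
Your proposal is correct and takes the same route as the paper. You specialize Proposition~\ref{prop:general-edge-regression} with $M=\binom nr$, $W=W_{<r}$ and $d=d_{n,r}$, and use Theorem~\ref{thm:higher-projection} to identify $P_{W_{<r}}=\cU_{n,r}$. Your Frisch--Waugh--Lovell computation of the $t$-ratio is also correct, and it fills in the step that the paper only describes in words.
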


The corollary follows from Proposition~\ref{prop:general-edge-regression} with
$M=\binom nr$, $P_W=\cU_{n,r}$, and $d=d_{n,r}$, together with
Theorem~\ref{thm:higher-projection}.

The same construction extends to several subset-level predictors: U-center
the response and every predictor column, then fit the resulting ordinary
multiple regression, provided the residualized predictor columns are linearly
independent.

\section{Population pairings and projection representation}
\label{sec:population-pairings}

\subsection{General Hoeffding components}\label{sec:general_hoeffding}

We now introduce the population counterpart of the sample-label ANOVA\@.
Fix $r\ge1$, and write $P_X$ for the law of $X$.  Let
$O_i=(X_i,Y_i)$, $i\ge1$, be i.i.d.\ copies of $O=(X,Y)$.  For
$I=\{i_1<\cdots<i_r\}$, write $X_I=(X_{i_1},\ldots,X_{i_r})$ and define
$Y_I$ analogously.  Let $a:\mathcal X^r\to\R$ and
$b:\mathcal Y^r\to\R$ be symmetric kernels satisfying
\[
 \E\{a(X_1,\ldots,X_r)^2\}<\infty,
 \qquad
 \E\{b(Y_1,\ldots,Y_r)^2\}<\infty.
\]
Unlike the sample-label ANOVA of Section~\ref{sec:higher-order-u-centering}, this decomposition takes place in
a population $L^2$ space and is defined by conditional expectation.  We use
the usual Hoeffding components.  To define them, write
\[
 g_j(x_1,\ldots,x_j)
 =\E\{a(x_1,\ldots,x_j,X_{j+1},\ldots,X_r)\},
 \qquad 0\le j\le r,
\]
where $g_0=\E a(X_1,\ldots,X_r)$ and $g_r=a$.  For
$S=\{s_1<\cdots<s_\ell\}\subseteq[j]$, write
$x_S=(x_{s_1},\ldots,x_{s_\ell})$.  Set $a_0=g_0$, interpret
$a_0(x_\varnothing)$ as $a_0$, and define recursively
\[
 a_j(x_1,\ldots,x_j)
 =g_j(x_1,\ldots,x_j)
 -\sum_{\ell=0}^{j-1}
   \sum_{\substack{S\subseteq[j]\\|S|=\ell}}a_\ell(x_S).
\]
Equivalently,
\[
 g_j(x_1,\ldots,x_j)
 =\sum_{S\subseteq[j]}a_{|S|}(x_S).
\]
Taking $j=r$ gives the Hoeffding decomposition
\begin{equation}\label{eq:general-hoeffding-decomp}
 a(X_I)=\sum_{S\subseteq I}a_{|S|}(X_S),
 \qquad I\in\binom{[n]}r.
\end{equation}
A symmetric $j$-argument kernel is called \emph{canonical} (or completely
degenerate) if integrating out any one argument gives zero for
$P_X^{j-1}$-almost every choice of the remaining arguments.  The components
in the standard Hoeffding decomposition above are symmetric and canonical;
in particular,
\begin{equation}\label{eq:general-canonical}
 \E\{a_j(x_1,\ldots,x_{j-1},X_j)\}=0,
 \qquad j\ge1.
\end{equation}
See \citet{Hoeffding1948} and \citet{Serfling1980} for the standard
derivation and canonicality properties.
We define $b_0,\ldots,b_r$ analogously.

For $0\le j\le r$, we define the bilinear interaction pairings
\begin{equation*}
 \Theta_j(a,b)
 =\E\{a_j(X_1,\ldots,X_j)b_j(Y_1,\ldots,Y_j)\},
\end{equation*}
where $\Theta_0=a_0b_0$.  The target of interest is the $r$th
Hoeffding-component pairing $\Theta_r(a,b)$.
If $X\perp Y$, then for every $j\ge1$ the two canonical components are
independent and have mean zero, so $\Theta_j(a,b)=0$.

\subsection{Population moments indexed by overlap size}

For $0\le s\le r$, take $2r-s$ i.i.d.\ paired observations and set
\[
 I=\{1,\ldots,r\},
 \qquad
 J_s=\{1,\ldots,s,r+1,\ldots,2r-s\}.
\]
Both $I$ and $J_s$ contain $r$ labels, and $|I\cap J_s|=s$.  We define
\[
 \eta_s(a,b)=\E\{a(X_I)b(Y_{J_s})\}.
\]
Equivalently,
\[
 \eta_s(a,b)
 =\E\left[
 a(X_1,\ldots,X_r)
 b(Y_1,\ldots,Y_s,Y_{r+1},\ldots,Y_{2r-s})
 \right].
\]
The subscripts denote sample labels rather than argument positions: the two
kernel evaluations share the first $s$ paired observations and use distinct
observations for their remaining arguments.  The following triangular identity
gives a bilinear form of the classical
covariance decomposition for Hoeffding components
\citep{Hoeffding1948,Serfling1980}.
When $a=b=h$, it reduces to the familiar relation between an
overlap covariance and the Hoeffding variance components.  In the present
notation, it also provides the link to the finite-sample projection below.

\begin{theorem}
\label{thm:overlap-anova}
For every $0\le s\le r$,
\begin{equation}\label{eq:eta-theta-triangular}
 \eta_s(a,b)=\sum_{j=0}^s\binom sj\Theta_j(a,b).
\end{equation}
Consequently,
\begin{equation}\label{eq:theta-binomial-inversion}
 \Theta_r(a,b)
 =\sum_{s=0}^r(-1)^{r-s}\binom rs\eta_s(a,b).
\end{equation}
\end{theorem}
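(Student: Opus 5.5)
Substitute the Hoeffding decomposition \eqref{eq:general-hoeffding-decomp} into both kernels and expand the product. With $I=\{1,\ldots,r\}$ and $J_s$ as defined above,
\[
 \eta_s(a,b)=\sum_{S\subseteq I}\sum_{T\subseteq J_s}
 \E\{a_{|S|}(X_S)\,b_{|T|}(Y_T)\}.
\]
Every expectation is finite. Each Hoeffding component is an $L^2$ projection (a signed sum of conditional expectations) of a square-integrable kernel, so it is itself square integrable, and Cauchy--Schwarz applies to each product.

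The key step is to show that a cross term vanishes unless $S=T$. Suppose $S\ne T$. Without loss of generality there is a label $k\in S\setminus T$; the case $k\in T\setminus S$ is symmetric. Condition on $\{O_i:i\in(S\cup T)\setminus\{k\}\}$. Because the paired observations $O_i=(X_i,Y_i)$ are independent across labels, $O_k$ is independent of that $\sigma$-field. The factor $b_{|T|}(Y_T)$ is measurable with respect to it, since $k\notin T$. Hence the conditional expectation is
\[
 b_{|T|}(Y_T)\,\E\{a_{|S|}(x_{S\setminus\{k\}},X_k)\}\big|_{x=X},
\]
which is zero almost surely by canonicality \eqref{eq:general-canonical} and the symmetry of $a_{|S|}$. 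The case $S=T=\varnothing$ gives $a_0b_0=\Theta_0$. The point to be careful about is that independence holds only across labels, not between $X_k$ and $Y_k$. The argument works because the integrated label $k$ appears in exactly one of the two factors, so the dependence within a pair never enters. This is the main subtlety, and the one-label conditioning above handles it.

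When $S=T$, we need $S\subseteq I\cap J_s=\{1,\ldots,s\}$, and the term equals $\E\{a_j(X_S)b_j(Y_S)\}=\Theta_j(a,b)$ with $j=|S|$. This uses only that the $O_i$ are identically distributed and that the components are symmetric. Counting the $\binom sj$ subsets of size $j$ gives \eqref{eq:eta-theta-triangular}.

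For \eqref{eq:theta-binomial-inversion}, apply binomial inversion. Substituting the triangular identity gives
\[
 \sum_{s=0}^r(-1)^{r-s}\binom rs\eta_s
 =\sum_{j=0}^r\Theta_j\sum_{s=j}^r(-1)^{r-s}\binom rs\binom sj.
\]
Use $\binom rs\binom sj=\binom rj\binom{r-j}{s-j}$. The inner sum then becomes $\binom rj(1-1)^{r-j}$, which equals $\delta_{jr}$. Only the $j=r$ term survives, leaving $\Theta_r(a,b)$.
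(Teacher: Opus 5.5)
Your proof is correct and follows the paper's argument: expand both kernels by the Hoeffding decomposition, kill every term with $S\ne T$ by integrating out a label that lies in only one of the two sets (canonicality plus independence across labels), count the $\binom sj$ common subsets of $I\cap J_s=\{1,\ldots,s\}$, and finish with binomial inversion. The extra points you spell out---square integrability of the components, why dependence between $X_k$ and $Y_k$ does not interfere, and the explicit $(1-1)^{r-j}$ computation---are sound elaborations of steps the paper states more tersely.
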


\begin{proof}
Expand the two raw kernels using
\eqref{eq:general-hoeffding-decomp}.  Consider a term
$a_{|S|}(X_S)b_{|T|}(Y_T)$ with $S\subseteq I$ and $T\subseteq J_s$.
If $S\ne T$, some sample label belongs to exactly one of the two sets.  On
conditioning on all other labels, canonicality
\eqref{eq:general-canonical} makes the expectation zero.  A nonzero term must
therefore have $S=T\subseteq I\cap J_s=[s]$.  For each $j$, there are
$\binom sj$ shared subsets of size $j$, and each contributes
$\Theta_j(a,b)$.  This proves \eqref{eq:eta-theta-triangular};
\eqref{eq:theta-binomial-inversion} is ordinary binomial inversion.
\end{proof}
For $r=2$, Theorem~\ref{thm:overlap-anova} reduces to
Proposition~\ref{prop:pair-overlap}.
More generally, the overlap-$s$ moment contains Hoeffding components through
order $s$, and binomial inversion isolates the order-$r$ component.

\subsection{\texorpdfstring{{A spectral example based on four-argument matching kernels}}{A spectral example based on four-argument matching kernels}}\label{sec:example}
Let $k$ and $\ell$ be bounded measurable positive-definite kernels with
separable RKHSs $\mathcal F$ and $\mathcal G$, respectively.  For four
arguments, write $k_{ij}=k(x_i,x_j)$ and
$\ell_{ij}=\ell(y_i,y_j)$, and define the observable symmetric matching
kernels
\begin{align}
 m_k(x_1,x_2,x_3,x_4)
 &=\frac13\{k_{12}k_{34}+k_{13}k_{24}+k_{14}k_{23}\},\label{eq:matching-k}\\
 m_\ell(y_1,y_2,y_3,y_4)
 &=\frac13\{\ell_{12}\ell_{34}+\ell_{13}\ell_{24}
                  +\ell_{14}\ell_{23}\}.\label{eq:matching-l}
\end{align}
The three terms correspond to the three perfect matchings of four labels. Let $k_c$ and $\ell_c$ denote the population-centered kernels.  For example,
\[
 k_c(x,x')
 =k(x,x')-\E k(x,X)-\E k(X,x')+\E k(X,X'),
\]
where $X$ and $X'$ are independent with distribution $P_X$; define
$\ell_c$ analogously.  Write
$k_{c,ij}=k_c(x_i,x_j)$ and $\ell_{c,ij}=\ell_c(y_i,y_j)$.  The
fourth Hoeffding components of the raw matching kernels are
\begin{align*}
 (m_k)_4(x_1,x_2,x_3,x_4)
 &=\frac13\{k_{c,12}k_{c,34}+k_{c,13}k_{c,24}
                  +k_{c,14}k_{c,23}\},\\
 (m_\ell)_4(y_1,y_2,y_3,y_4)
 &=\frac13\{\ell_{c,12}\ell_{c,34}+\ell_{c,13}\ell_{c,24}
                  +\ell_{c,14}\ell_{c,23}\}.
\end{align*}
Appendix~\ref{app:rkhs} verifies these projections.
Let
\[
 \phi_c(x)=k(x,\cdot)-\E k(X,\cdot),
 \qquad
 \psi_c(y)=\ell(y,\cdot)-\E \ell(Y,\cdot)
\]
be the centered feature maps.  For $u\in\mathcal F$ and $v,g\in\mathcal G$,
define the rank-one operator $u\otimes v:\mathcal G\to\mathcal F$ by
$(u\otimes v)g=u\inner{v}{g}_{\mathcal G}$.  Define the cross-covariance
operator
\[
 C_{XY}=\E\{\phi_c(X)\otimes\psi_c(Y)\}:\mathcal G\to\mathcal F.
\]
Let $C_{XY}^{\ast}$ denote its adjoint, characterized by
\[
 \inner{C_{XY}g}{f}_{\mathcal F}
 =\inner{g}{C_{XY}^{\ast}f}_{\mathcal G}.
\]
For any orthonormal basis $(f_q)$ of $\mathcal G$, define the
Hilbert--Schmidt norm by
\[
 \norm{C_{XY}}_{\mathrm{HS}}^2
 =\sum_q\norm{C_{XY}f_q}_{\mathcal F}^2,
\]
and let $s_1,s_2,\ldots$ be the singular values of $C_{XY}$.  Then population
HSIC is
\begin{equation}\label{eq:H2-spectrum}
 H_2:=\Theta_2(k,\ell)
 =\norm{C_{XY}}_{\mathrm{HS}}^2
 =\sum_{j\ge1}s_j^2;
\end{equation}
see \citet{GrettonEtAl2005}.  The pairing of the fourth Hoeffding
components of the matching kernels is
\[
 H_4:=\Theta_4(m_k,m_\ell)
 =\E\{(m_k)_4(X_1,\ldots,X_4)
       (m_\ell)_4(Y_1,\ldots,Y_4)\}.
\]

Expanding the two three-term averages gives nine pairs of matchings.  For each
of the three coincident matching pairs, the two matched edges involve
disjoint groups of observations, so the expectation factors as $H_2^2$.  For
each of the six different matching pairs, their union is a four-cycle.  By
relabeling, all six four-cycle terms have the same expectation, and
Appendix~\ref{app:rkhs} shows that
\[
 \E\{k_{c,12}k_{c,34}\ell_{c,13}\ell_{c,24}\}
 =\tr\{(C_{XY}^{\ast}C_{XY})^2\}
 =\sum_{j\ge1}s_j^4.
\]
Combining the three coincident and six four-cycle terms with their common
factor $1/9$ gives
\begin{equation}\label{eq:H4-spectrum}
 \boxed{
 H_4=\frac13H_2^2+\frac23\sum_{j\ge1}s_j^4.
 }
\end{equation}
Both terms on the right are nonnegative, so $H_4=0$ if and only if
$H_2=0$, or equivalently $C_{XY}=0$.  Under the kernel conditions of
\citet{SzaboSriperumbudur2018}, this is further equivalent to $X\perp Y$.
Equation~\eqref{eq:H4-spectrum} recovers the fourth spectral moment from the
two population pairings:
\[
 S_4:=\sum_{j\ge1}s_j^4=\frac{3H_4-H_2^2}{2}.
\]
When $H_2>0$, the corresponding spectral participation ratio is
\begin{equation}\label{eq:spectral-participation}
 d_{\mathrm{eff}}
 :=\frac{(\sum_j s_j^2)^2}{\sum_j s_j^4}
 =\frac{2H_2^2}{3H_4-H_2^2}.
\end{equation}
This ratio is invariant to the choice of bases in the two RKHSs, but it
depends on the chosen kernels.  It should therefore be interpreted as the
effective number of dependence directions in the chosen RKHS geometry, not
as an intrinsic dimension of the joint distribution.  At independence,
$H_2=H_4=0$ and the ratio is undefined.

The distinction can be seen at fixed $H_2=\omega^2>0$.  If the spectrum is
rank one, with $s_1=\omega$, then
\[
 H_4=\omega^4,\qquad d_{\mathrm{eff}}=1.
\]
If instead $s_1=\cdots=s_q=\omega/\sqrt q$ and all remaining singular values
vanish, ordinary HSIC is still $\omega^2$, whereas
\[
 H_4=\omega^4\left(\frac13+\frac{2}{3q}\right),
 \qquad d_{\mathrm{eff}}=q.
\]
Thus $H_2$ records total squared cross-covariance, while $H_4$ distinguishes
whether that total is concentrated in one spectral direction or spread
across several directions.

\subsection{The direct overlap estimator}

We now turn from population interpretation to finite-sample estimation.
Assume $n\ge2r$.  For $I,J\in\cI_{n,r}$, let $A_I=a(X_I)$ and
$B_J=b(Y_J)$.  For $0\le s\le r$, the number of ordered pairs $(I,J)$ with
$|I\cap J|=s$ is
\begin{equation}\label{eq:N-s}
 N_{n,r,s}
 =\binom nr\binom rs\binom{n-r}{r-s}.
\end{equation}
Here $I$ and $J$ are themselves unordered subsets, but $(I,J)$ is an ordered
pair: choose $I$, choose the $s$ labels it shares with $J$, and then choose
the remaining $r-s$ labels of $J$.
Define
\begin{equation}\label{eq:eta-s-hat}
 \widehat\eta_{n,r,s}
 =\frac{1}{N_{n,r,s}}
  \sum_{\substack{I,J\in\cI_{n,r}\\|I\cap J|=s}}A_I B_J.
\end{equation}
By exchangeability, $\E\widehat\eta_{n,r,s}=\eta_s(a,b)$.  We therefore define
the direct unbiased estimator by
\begin{equation}\label{eq:direct-general}
 T_{n,r}^{\mathrm{dir}}
 =\sum_{s=0}^r(-1)^{r-s}\binom rs
  \widehat\eta_{n,r,s}.
\end{equation}
The term $s=0$ alone contains on the order of $n^{2r}$ products, so a literal
evaluation costs $O(n^{2r})$ arithmetic operations.  This count treats the
arrays $A$ and $B$ as already formed; evaluating the kernels on the
$\binom nr$ subsets is a separate, kernel-dependent cost.

Each $\widehat\eta_{n,r,s}$ is a U-statistic of order $2r-s$ after
symmetrization.  A U-statistic of order $q<2r$ can always be lifted to order
$2r$ by averaging its kernel over the $q$-subsets of $2r$ arguments.
Therefore \eqref{eq:direct-general} admits an order-$2r$ U-statistic
representation.  For $r=2$, this is the familiar fourth-order representation;
for $r=3$, it is a sixth-order representation.

\subsection{Direct evaluation by subset-margin inversion}
We first derive the $O(n^r)$ scalar evaluation directly, without using the
projection identity.  Define the unnormalized overlap sums
\begin{equation*}
 \Sigma_s(A,B)
 =\sum_{\substack{I,J\in\cI_{n,r}\\|I\cap J|=s}}A_I B_J,
 \qquad 0\le s\le r,
\end{equation*}
and, for $0\le k\le r$, the cross-products of the $k$-way margins
\begin{equation}\label{eq:Ck-margin}
 C_k(A,B)
 =\sum_{K\in\binom{[n]}k}
   (D_{k,r}A)(K)(D_{k,r}B)(K).
\end{equation}
For $k=0$, recall that $(D_{0,r}A)(\varnothing)=\sum_I A_I$.

\begin{proposition}
\label{prop:margin-inversion}
For $0\le k\le r$,
\begin{equation}\label{eq:Ck-Sigma}
 C_k(A,B)=\sum_{s=k}^r\binom{s}{k}\Sigma_s(A,B).
\end{equation}
Consequently,
\begin{equation}\label{eq:Sigma-Ck-inversion}
 \Sigma_s(A,B)
 =\sum_{k=s}^r(-1)^{k-s}\binom{k}{s}C_k(A,B),
 \qquad 0\le s\le r.
\end{equation}
\end{proposition}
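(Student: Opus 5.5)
The plan is to expand each margin cross-product into a double sum over $r$-sets and regroup the terms by overlap size; the inversion formula is then ordinary binomial inversion.

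For fixed $k$, substitute the definition of $D_{k,r}$ into \eqref{eq:Ck-margin}:
\[
 C_k(A,B)=\sum_{K\in\binom{[n]}k}\ \sum_{\substack{I\supseteq K}}\ \sum_{\substack{J\supseteq K}}A_IB_J
 =\sum_{I,J\in\cI_{n,r}}A_IB_J\,\#\Bigl\{K\in\tbinom{[n]}k:K\subseteq I\cap J\Bigr\}.
\]
Interchanging the sums is legitimate because all sums are finite. The count inside the last sum equals $\binom{|I\cap J|}{k}$. This vanishes unless $|I\cap J|\ge k$, and it holds also for $k=0$, where the count is $1$ and $C_0=(\sum_IA_I)(\sum_JB_J)$. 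Grouping the ordered pairs $(I,J)$ by $s=|I\cap J|$ then gives $C_k=\sum_{s=k}^r\binom sk\Sigma_s$, which is \eqref{eq:Ck-Sigma}.

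For \eqref{eq:Sigma-Ck-inversion}, I would view the relation as an upper-triangular linear system with unit diagonal, since $\binom kk=1$. Hence the system is invertible, and the claimed inverse only needs to be verified. Substituting \eqref{eq:Ck-Sigma} into the right-hand side gives
\[
 \sum_{k=s}^r(-1)^{k-s}\binom ks\sum_{t=k}^r\binom tk\Sigma_t
 =\sum_{t=s}^r\Sigma_t\sum_{k=s}^t(-1)^{k-s}\binom tk\binom ks .
\]
Next apply the identity $\binom tk\binom ks=\binom ts\binom{t-s}{k-s}$. The inner sum becomes $\binom ts\sum_{m=0}^{t-s}(-1)^m\binom{t-s}{m}=\binom ts\,\mathbf 1\{t=s\}$, so the right-hand side collapses to $\Sigma_s$.

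No step is a genuine obstacle. The only point requiring care is the counting step: the number of common $k$-subsets must be $\binom{|I\cap J|}{k}$, with the ordered-pair convention for $(I,J)$ matching the one used in $\Sigma_s$ and $N_{n,r,s}$.
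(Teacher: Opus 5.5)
Your proposal is correct and follows essentially the same route as the paper: you expand the two margins, count the common $k$-subsets as $\binom{|I\cap J|}{k}$, group the terms by overlap size, and then apply binomial inversion. The only difference is that you verify the identity $\sum_{k=s}^t(-1)^{k-s}\binom tk\binom ks=\mathbf 1\{t=s\}$ via $\binom tk\binom ks=\binom ts\binom{t-s}{k-s}$, whereas the paper simply cites it.
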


\noindent The proof is given in
Appendix~\ref{app:proof-margin-inversion}.

Since $\widehat\eta_{n,r,s}=\Sigma_s/N_{n,r,s}$,
Proposition~\ref{prop:margin-inversion} evaluates the direct estimator
\eqref{eq:direct-general} from the $r+1$ quantities $C_0,\ldots,C_r$.
Each $r$-subset contributes to the margins indexed by its $2^r$ subsets, so
all margins can be accumulated in $O(2^r\binom nr)$ additions.  Forming their
cross-products and applying \eqref{eq:Sigma-Ck-inversion} has the same
$O(n^r)$ order for fixed $r$.

For $r=2$, these margin cross-products are related to the terms in
\textup{(\ref{eq:pair-contraction})} by
\[
 C_0(A,B)=\tfrac14A_{\cdot\cdot}B_{\cdot\cdot},\qquad
 C_1(A,B)=\sum_{i=1}^n A_{i\cdot}B_{i\cdot},\qquad
 C_2(A,B)=\tfrac12\sum_{i\ne j}A_{ij}B_{ij}.
\]
Consequently, the right-hand side of \textup{(\ref{eq:pair-contraction})} is
\[
 \frac{1}{n(n-3)}
 \left\{
 2C_2(A,B)-\frac{2}{n-2}C_1(A,B)
 +\frac{4}{(n-1)(n-2)}C_0(A,B)
 \right\},
\]
which recovers the earlier pairwise inner-product formula.

\subsection{Projection representation by higher-order U-centering}
The margin cross-products from the preceding subsection also enter the ANOVA
projector, since
\begin{equation*}
 C_k(A,B)=\inner{A}{D_{k,r}^{\ast}D_{k,r}B}.
\end{equation*}
The result below shows that the appropriate linear combination equals the
cross-product of the projected residual arrays.

Relative to the coordinate basis indexed by $\cI_{n,r}$, invariance under
relabeling implies that the entries of $\cU_{n,r}$ depend only on intersection
size; see \citet{Dunkl1978,Filmus2016}.  For $I,J\in\cI_{n,r}$ with
$|I\cap J|=s$, set
\begin{equation}\label{eq:p-s-def}
 p_{n,r,s}
 =(\cU_{n,r})_{I,J}
 =\sum_{j=0}^s
   \frac{(-1)^{r-j}\binom sj}{\binom{n-r-j+1}{r-j}}.
\end{equation}
The following closed form gives
the coefficient normalization used in Theorem~\ref{thm:general-compression}
to identify the projected inner product in \textup{(\ref{eq:general-compression})}
with the direct overlap estimator in \textup{(\ref{eq:direct-general})}.

\begin{lemma}
\label{lem:p-s}
Let $r\ge1$, $n\ge2r$, and $\kappa=n-2r+1$.  For $0\le s\le r$,
\begin{equation}\label{eq:p-s-closed}
 p_{n,r,s}
 =(-1)^{r-s}
 \frac{\kappa(r-s)!(\kappa+s-1)!}{(\kappa+r)!}
 =(-1)^{r-s}
 \frac{(n-2r+1)(r-s)!(n-2r+s)!}{(n-r+1)!}.
\end{equation}
Moreover,
\begin{equation}\label{eq:group-coeff}
 \frac{N_{n,r,s}p_{n,r,s}}{d_{n,r}}
 =(-1)^{r-s}\binom rs.
\end{equation}
\end{lemma}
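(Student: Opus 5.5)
The plan is to treat \eqref{eq:p-s-closed} as an identity in the two parameters $(r,s)$ with $\kappa=n-2r+1$ held fixed, and to prove it by induction on $s$ using Pascal's rule. The identity \eqref{eq:group-coeff} will then follow by direct cancellation.

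\emph{Rewriting the summand.} In the $j$th summand of \eqref{eq:p-s-def}, write $m=n-r-j+1$ and $k=r-j$. Then $m-k=n-2r+1=\kappa$ does not depend on $j$, so
\[
 \frac{(-1)^{r-j}}{\binom{n-r-j+1}{r-j}}
 =(-1)^{r-j}\frac{(r-j)!\,\kappa!}{(\kappa+r-j)!}
 =:\tau_\kappa(r-j).
\]
The key observation is that this summand depends only on $r-j$ and $\kappa$. Hence $p_{n,r,s}=q(r,s):=\sum_{j=0}^s\binom sj\tau_\kappa(r-j)$ is a function of $(r,s)$ once $\kappa$ is fixed. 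Decreasing $r$ by one with $\kappa$ fixed simply corresponds to a different $n$, and the formula is insensitive to this.

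\emph{Induction on $s$.} For the base case $s=0$ we have $q(r,0)=\tau_\kappa(r)=(-1)^r r!\,\kappa!/(\kappa+r)!$. This equals the claimed $F(r,0)=(-1)^r\kappa\, r!\,(\kappa-1)!/(\kappa+r)!$, where $F(r,s)$ denotes the middle expression in \eqref{eq:p-s-closed}.

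For the inductive step, take $0\le s<r$ and split $\binom{s+1}{j}=\binom sj+\binom s{j-1}$. Then shift the index in the second sum, using that $\tau_\kappa(r-(j'+1))=\tau_\kappa((r-1)-j')$. This gives
\[
 q(r,s+1)=q(r,s)+q(r-1,s).
\]
The condition $s\le r-1$ keeps $q(r-1,s)$ in the valid range. It then remains to check that $F$ obeys the same recurrence. Factoring out $(-1)^{r-s}\kappa(\kappa+s-1)!(r-s-1)!/(\kappa+r)!$, we get
\[
 F(r,s)+F(r-1,s)
 =(-1)^{r-s}\frac{\kappa(\kappa+s-1)!(r-s-1)!}{(\kappa+r)!}\{(r-s)-(\kappa+r)\}
 =F(r,s+1),
\]
because $-(\kappa+s)(\kappa+s-1)!=-(\kappa+s)!$. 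This is the only genuinely delicate step: one has to notice that the recurrence moves $r$ while $\kappa$ stays fixed, rather than $n$ staying fixed. I expect identifying this to be the main obstacle. Once it is seen, the rest is bookkeeping. The second form in \eqref{eq:p-s-closed} is obtained by substituting $\kappa=n-2r+1$.

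\emph{The normalization.} By \eqref{eq:general-df}, $d_{n,r}=\binom nr\kappa/(n-r+1)$. Together with \eqref{eq:N-s}, this gives $N_{n,r,s}/d_{n,r}=\binom rs\binom{n-r}{r-s}(n-r+1)/\kappa$. Multiplying by the second form of $p_{n,r,s}$, the factor $\kappa$ cancels. The factor $(n-r+1)/(n-r+1)!$ becomes $1/(n-r)!$. Finally, $\binom{n-r}{r-s}(r-s)!(n-2r+s)!/(n-r)!=1$. What remains is $(-1)^{r-s}\binom rs$, which is \eqref{eq:group-coeff}.
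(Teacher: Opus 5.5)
Your proof is correct, but it reaches the closed form by a different route than the paper.

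\emph{The paper's route.} The paper also rewrites the summand in terms of $\kappa$: substituting $q=s-j$ gives $1/\binom{\kappa+r-s+q}{r-s+q}$. It then uses the beta-integral representation $1/\binom{\kappa+m}{m}=\kappa\int_0^1 t^m(1-t)^{\kappa-1}\,dt$. Under the integral the alternating binomial sum collapses to $t^{r-s}(1-t)^s$, and a second beta evaluation, $\kappa B(r-s+1,\kappa+s)$, gives the formula in one step.

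\emph{Your route.} Your argument stays entirely finite. With $\kappa$ fixed, the summand depends only on $r-j$, which yields the Pascal-type recurrence $q(r,s+1)=q(r,s)+q(r-1,s)$. You then check that $F$ satisfies the same recurrence and base case. I verified both computations: the factorization giving $\{(r-s)-(\kappa+r)\}=-(\kappa+s)$ is right.

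\emph{Trade-off.} The integral route derives the closed form and explains it as a beta-function value. Your induction only verifies a formula you must already have in hand. In exchange, it needs no analysis and makes the finite-difference structure explicit. After absorbing the sign $(-1)^{r-j}$, your recurrence says that an $(s+1)$st backward difference of $k\mapsto k!\,\kappa!/(\kappa+k)!$ is a difference of $s$th differences. The integral representation makes exactly that difference trivial to evaluate.

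\emph{Bookkeeping in the induction.} Because the recurrence lowers $r$, the inductive statement should be ``$q(r,s)=F(r,s)$ for all integers $r\ge s\ge 0$ at fixed $\kappa\ge1$.'' This must include $r=0$, since the step to $(r,s)=(1,1)$ uses $q(0,0)=F(0,0)=1$. Your base case does cover this. Your remark that lowering $r$ ``corresponds to a different $n$'' is unnecessary, and for $r=1$ it would leave the lemma's range $r\ge1$. Since $q$ and $F$ are defined directly in terms of $(r,s,\kappa)$, you can drop that interpretation.

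\emph{Normalization.} Your derivation of the normalization identity is the same direct cancellation the paper uses.
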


The proof is given in Appendix~\ref{app:p-s-proof}.

\begin{samepage}
\begin{theorem}[Projection representation and unbiasedness]
\label{thm:general-compression}
Let $a$ and $b$ be symmetric square-integrable kernels with $r$ arguments,
and let $n\ge2r$.  Form the arrays $A_I=a(X_I)$ and $B_I=b(Y_I)$ on
$\cI_{n,r}$.  Then
\begin{equation}\label{eq:general-compression}
 \boxed{
 T_{n,r}^{\mathrm{dir}}
 =\frac{1}{d_{n,r}}
   \inner{\cU_{n,r}A}{\cU_{n,r}B}
 =\frac{1}{d_{n,r}}\inner{A}{\cU_{n,r}B}
 =\frac{1}{d_{n,r}}\inner{\cU_{n,r}A}{B}.
 }
\end{equation}
In particular,
\begin{equation*}
 \E\left[
 \frac{1}{d_{n,r}}
 \inner{\cU_{n,r}A}{\cU_{n,r}B}
\right]
 =\Theta_r(a,b).
\end{equation*}
\end{theorem}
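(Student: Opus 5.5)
The plan is to work entirely with the matrix entries of $\cU_{n,r}$ in the coordinate basis, using the intersection-size description \eqref{eq:p-s-def}. First I dispose of the last two equalities in \eqref{eq:general-compression}. By Theorem~\ref{thm:higher-projection}, $\cU_{n,r}$ is self-adjoint and idempotent, so
\[
\inner{\cU_{n,r}A}{\cU_{n,r}B}=\inner{A}{\cU_{n,r}^2B}=\inner{A}{\cU_{n,r}B},
\]
and symmetrically this also equals $\inner{\cU_{n,r}A}{B}$. It therefore suffices to identify $d_{n,r}^{-1}\inner{A}{\cU_{n,r}B}$ with $T_{n,r}^{\mathrm{dir}}$.

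Next I write the bilinear form entrywise and group the pairs by overlap:
\[
\inner{A}{\cU_{n,r}B}=\sum_{I,J\in\cI_{n,r}}A_I(\cU_{n,r})_{I,J}B_J=\sum_{s=0}^r p_{n,r,s}\,\Sigma_s(A,B),
\]
since by \eqref{eq:p-s-def} the $(I,J)$ entry depends only on $s=|I\cap J|$. To justify \eqref{eq:p-s-def}, I would expand \eqref{eq:higher-u-operator}. The $(I,J)$ entry of $D_{j,r}^\ast D_{j,r}$ counts the $j$-sets $K$ with $K\subseteq I\cap J$, which is $\binom sj$, so the entry equals $\sum_j c_{n,r,j}\binom sj$, with terms with $j>s$ vanishing. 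Writing $\Sigma_s=N_{n,r,s}\widehat\eta_{n,r,s}$ by \eqref{eq:eta-s-hat} and dividing by $d_{n,r}$ gives the coefficients $N_{n,r,s}p_{n,r,s}/d_{n,r}$. By \eqref{eq:group-coeff} of Lemma~\ref{lem:p-s}, these equal $(-1)^{r-s}\binom rs$, which is exactly \eqref{eq:direct-general}.

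The only substantive work is thus Lemma~\ref{lem:p-s}, which may be assumed here. If I had to check \eqref{eq:group-coeff} directly, I would substitute \eqref{eq:N-s}, \eqref{eq:p-s-closed} and $d_{n,r}=\binom nr\kappa/(n-r+1)$ and cancel factorials. Write $\binom{n-r}{r-s}=(n-r)!/\{(r-s)!(n-2r+s)!\}$; then the $(r-s)!$ and $(n-2r+s)!$ cancel against the factors in $p_{n,r,s}$, and $(n-r)!/(n-r+1)!=1/(n-r+1)$ cancels the denominator of $d_{n,r}$. This leaves $(-1)^{r-s}\binom rs$. The main obstacle is the closed form \eqref{eq:p-s-closed}, an alternating binomial sum; I would prove it by induction on $s$, using the recurrence for $c_{n,r,j}$ and Pascal's rule.

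Finally, unbiasedness follows by linearity of expectation. Each $\widehat\eta_{n,r,s}$ is unbiased for $\eta_s(a,b)$ by exchangeability, and the products are integrable by Cauchy--Schwarz since $a$ and $b$ are square-integrable. Hence $\E T_{n,r}^{\mathrm{dir}}=\sum_s(-1)^{r-s}\binom rs\eta_s(a,b)=\Theta_r(a,b)$ by \eqref{eq:theta-binomial-inversion} of Theorem~\ref{thm:overlap-anova}.
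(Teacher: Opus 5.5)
Your proposal is correct and follows the paper's proof essentially step for step. Self-adjointness and idempotence of $\cU_{n,r}$ reduce everything to $\inner{A}{\cU_{n,r}B}=\sum_{s}p_{n,r,s}\Sigma_s(A,B)$, the normalization \eqref{eq:group-coeff} turns the coefficients into $(-1)^{r-s}\binom rs$, and Theorem~\ref{thm:overlap-anova} gives unbiasedness.

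Your derivation of the entries $p_{n,r,s}$, by counting the $\binom sj$ common $j$-subsets, is a nice self-contained replacement for the paper's appeal to relabeling invariance. One caution: if you prove \eqref{eq:p-s-closed} by induction on $s$, Pascal's rule produces shifted sums such as $\sum_j\binom sj c_{n,r,j+1}$, so the induction hypothesis must also cover these shifts. The paper avoids this with a beta integral.
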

\end{samepage}

\begin{proof}
Since $\cU_{n,r}$ is self-adjoint and idempotent,
\[
 \inner{\cU_{n,r}A}{\cU_{n,r}B}
 =\inner{A}{\cU_{n,r}B}.
\]
Expanding the latter inner product and grouping the ordered pairs $(I,J)$
according to $s=|I\cap J|$, the definitions of $p_{n,r,s}$,
$\Sigma_s(A,B)$, and $\widehat\eta_{n,r,s}$ give
\begin{align*}
 \frac{1}{d_{n,r}}\inner{A}{\cU_{n,r}B}
 &=\frac{1}{d_{n,r}}\sum_{s=0}^r
   p_{n,r,s}\Sigma_s(A,B)\\
 &=\sum_{s=0}^r
   \frac{N_{n,r,s}p_{n,r,s}}{d_{n,r}}
   \widehat\eta_{n,r,s}\\
 &=\sum_{s=0}^r(-1)^{r-s}\binom rs
   \widehat\eta_{n,r,s}
 =T_{n,r}^{\mathrm{dir}}.
\end{align*}
The final line uses \eqref{eq:group-coeff} and the definition
\eqref{eq:direct-general}.
Unbiasedness follows from Theorem~\ref{thm:overlap-anova}.
\end{proof}

From the raw arrays, accumulating the margins and forming $\cU_{n,r}B$ each
cost $O(2^r\binom nr)$, since every entry touches $2^r$ margins.  The one-sided
identity $\inner{A}{\cU_{n,r}B}/d_{n,r}$ adds only $O(\binom nr)$, giving
$O(2^r\binom nr)=O(n^r)$ arithmetic for fixed $r$.

For the matching kernels in Section~\ref{sec:example}, the sample
calculation uses the observable raw arrays $A_I=m_k(X_I)$ and
$B_I=m_\ell(Y_I)$; the population-centered kernels are needed only to
interpret the target.  When $n\ge8$, a literal implementation of the direct
overlap estimator contains terms involving as many as eight distinct
observations.  Proposition~\ref{prop:margin-inversion} and
Theorem~\ref{thm:general-compression} evaluate exactly the same unbiased
estimator of $H_4$ in $O(n^4)$ arithmetic after the pairwise Gram entries have
been formed.  Although $H_4$ is nonnegative, this unbiased cross-product
estimator can be negative in finite samples.

The projected residual arrays also have zero lower-order margins.  Their
entrywise conditional interpretation is given next.

\begin{corollary}
\label{cor:general-conditional}
Under the assumptions and notation of Theorem~\ref{thm:general-compression},
for a fixed $I\in\cI_{n,r}$,
\begin{equation}\label{eq:general-conditional}
 \E\left[(\cU_{n,r}A)_I\given X_I\right]
 =\lambda_{n,r}a_r(X_I),
 \qquad
 \lambda_{n,r}
 =\frac{d_{n,r}}{\binom nr}
 =\frac{n-2r+1}{n-r+1}.
\end{equation}
\end{corollary}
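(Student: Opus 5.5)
We prove Corollary~\ref{cor:general-conditional} by expanding the projected entry in the coordinate basis and conditioning term by term. By definition of $p_{n,r,s}$ in \eqref{eq:p-s-def},
\[
 (\cU_{n,r}A)_I=\sum_{J\in\cI_{n,r}}p_{n,r,|I\cap J|}\,A_J .
\]
We take the conditional expectation given $X_I$ of each $A_J=a(X_J)$ using the Hoeffding decomposition \eqref{eq:general-hoeffding-decomp}:
\[
 A_J=\sum_{S\subseteq J}a_{|S|}(X_S).
\]
If $S\not\subseteq I$, then $S$ contains a label outside $I$. Integrating over that independent observation gives zero by canonicality \eqref{eq:general-canonical}. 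Hence
\[
 \E[A_J\mid X_I]=\sum_{S\subseteq I\cap J}a_{|S|}(X_S).
\]

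Substituting this and interchanging sums, we get
\[
 \E[(\cU_{n,r}A)_I\mid X_I]=\sum_{S\subseteq I}c_S\,a_{|S|}(X_S),
 \qquad
 c_S=\sum_{J\supseteq S}p_{n,r,|I\cap J|}.
\]
The argument then splits according to whether $S=I$ or $S\subsetneq I$.

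\textbf{Case $S=I$.} The only $J\supseteq I$ is $J=I$, so $c_I=p_{n,r,r}$. By Lemma~\ref{lem:p-s} with $\kappa=n-2r+1$,
\[
 p_{n,r,r}=\frac{\kappa(\kappa+r-1)!}{(\kappa+r)!}=\frac{\kappa}{\kappa+r}=\frac{n-2r+1}{n-r+1}.
\]
This equals $d_{n,r}/\binom nr$ by \eqref{eq:general-df}. So the top term is $\lambda_{n,r}a_r(X_I)$.

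\textbf{Case $S\subsetneq I$.} We need $c_S=0$, and there is a structural way to see this. Let $e_I$ be the coordinate indicator of $I$ and $g_S=D_{|S|,r}^\ast\delta_S$ the indicator of $\{J:J\supseteq S\}$. Then
\[
 c_S=\inner{\cU_{n,r}e_I}{g_S}=\inner{e_I}{\cU_{n,r}g_S}
\]
by self-adjointness (Theorem~\ref{thm:higher-projection}). Since $|S|<r$, $g_S$ lies in the lower-order ANOVA space $\cL_{n,r}$; this follows from the nesting identity $D_{r-1,r}^\ast D_{j,r-1}^\ast=(r-j)D_{j,r}^\ast$ shown earlier. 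By item 4 of Theorem~\ref{thm:higher-projection}, $\cU_{n,r}g_S=0$, hence $c_S=0$. This disposes of all lower-order components at once, with no need to sum the alternating coefficients $p_{n,r,s}$ weighted by counts of $J$ with given $|I\cap J|$ and $J\supseteq S$.

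The only real obstacle is the conditioning step. It must be justified measure-theoretically: the labels outside $I$ are independent of $X_I$, and canonicality holds only almost everywhere, so we apply Fubini to square-integrable components. Identifying $c_S$ as an inner product with an element of $\cL_{n,r}$ is the key trick. Combining the two cases gives \eqref{eq:general-conditional}.
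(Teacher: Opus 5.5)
Your proof is correct and is essentially the paper's argument: both rest on the fact that every lower-order effect lies in $\cL_{n,r}=\ker\cU_{n,r}$, and both use canonicality to leave only the diagonal entry. The differences are ordering and one computation. You condition first and then show each lower-order coefficient vanishes via $c_S=(\cU_{n,r}g_S)_I=0$, whereas the paper first writes $\cU_{n,r}A=\cU_{n,r}A^{(r)}$ pathwise and then conditions. You also read the diagonal entry $p_{n,r,r}=\kappa/(\kappa+r)$ off Lemma~\ref{lem:p-s}, while the paper obtains $d_{n,r}/\binom nr$ from permutation symmetry and the trace of the orthogonal projection.
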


\noindent The proof is given in
Appendix~\ref{app:proof-general-conditional}.

\subsection{\texorpdfstring{{The explicit estimator for three-argument kernels}}{The explicit estimator for three-argument kernels}}

We now specialize the general estimator to $r=3$.  Assume $n\ge6$.  For
symmetric three-argument kernels $a$ and $b$, define the arrays
$A_{ijk}=a(X_i,X_j,X_k)$ and $B_{ijk}=b(Y_i,Y_j,Y_k)$ on distinct triples.
The target is
\[
 \Theta_3(a,b)=\E\{a_3(X_1,X_2,X_3)b_3(Y_1,Y_2,Y_3)\}.
\]
The direct estimator is
\begin{equation*}
 \widehat\Theta_{n,3}
 =\widehat\eta_{n,3,3}
 -3\widehat\eta_{n,3,2}
 +3\widehat\eta_{n,3,1}
 -\widehat\eta_{n,3,0},
\end{equation*}
which involves up to six distinct observations.  By
Theorem~\ref{thm:general-compression},
\begin{equation*}
 \widehat\Theta_{n,3}
 =\frac{1}{\binom n3-\binom n2}
   \sum_{i<j<k}
   (\cU_{n,3}A)_{ijk}(\cU_{n,3}B)_{ijk},
\end{equation*}
where each centered array is given by \eqref{eq:r3-formula}.  After the
arrays for the three-argument kernels have been formed, the exact centered
calculation uses $O(n^3)$ arithmetic operations instead of the $O(n^6)$
products in a literal enumeration over pairs of triples.

\subsection{Application: residual representation for the variance of the
highest-order Hoeffding component}
\label{subsec:variance-components}

Assume $n\ge 2r$.  Let $Z_1,\ldots,Z_n$ be i.i.d.\ copies of a random variable
$Z$ taking values in $\mathcal Z$.  For $I=\{i_1<\cdots<i_r\}$, write
$Z_I=(Z_{i_1},\ldots,Z_{i_r})$.  Let $h:\mathcal Z^r\to\R$ be a symmetric
square-integrable kernel, and define
\begin{equation*}
 U_n(h)=\binom nr^{-1}\sum_{I\in\cI_{n,r}}h(Z_I).
\end{equation*}
Write $h_1,\ldots,h_r$ for its Hoeffding components and
\begin{equation*}
 \sigma_j^2=\E\{h_j(Z_1,\ldots,Z_j)^2\}.
\end{equation*}
Orthogonality of the Hoeffding decomposition gives the exact finite-sample
variance formula
\begin{equation*}
 \Var\{U_n(h)\}
 =\sum_{j=1}^r\frac{\binom rj^2}{\binom nj}\sigma_j^2.
\end{equation*}
This variance decomposition is classical; see
\citet{Hoeffding1948,Serfling1980}.  Here we focus only on $\sigma_r^2$, the
variance of the highest-order Hoeffding component.

\begin{samepage}
Let $H\in\cV_{n,r}$ be the kernel array with $H_I=h(Z_I)$.  Taking $A=B=H$
in Theorem~\ref{thm:general-compression} identifies the corresponding
overlap-based estimator with the normalized squared residual norm
\begin{equation*}
 \widehat\sigma_{r,n}^2
 :=T_{n,r}^{\mathrm{dir}}
 =\frac{1}{d_{n,r}}\norm{\cU_{n,r}H}^2.
\end{equation*}
Its expectation is $\Theta_r(h,h)=\sigma_r^2$, and it is nonnegative.
\end{samepage}

For an order-three U-statistic,
\begin{equation*}
 \Var\{U_n(h)\}
 =\frac{9}{n}\sigma_1^2
  +\frac{9}{\binom n2}\sigma_2^2
  +\frac{1}{\binom n3}\sigma_3^2,
\end{equation*}
and
\begin{equation*}
 \widehat\sigma_{3,n}^2
 =\frac{1}{\binom n3-\binom n2}
   \sum_{i<j<k}(\cU_{n,3}H)_{ijk}^2.
\end{equation*}
If $h_1=h_2=0$---equivalently, if $h-\E h$ is canonical of order three---then
$\Var\{U_n(h)\}=\sigma_3^2/\binom n3$, so
$\widehat\sigma_{3,n}^2/\binom n3$ is a nonnegative unbiased variance
estimator.

\section{Discussion}
\label{sec:discussion}
\looseness=-1
U-centering is least-squares residualization on sample subsets, and the
normalized residual cross-product equals the direct overlap estimator of the
$r$th Hoeffding pairing.  This quantity can be evaluated in $O(n^r)$
operations for fixed $r$.  When both arrays are formed from the same kernel
and sample, the projection identity identifies the overlap-based estimator of
the variance of the highest-order Hoeffding component with a nonnegative
squared residual norm.  Thus the familiar pairwise formula is the
$r=2$ case of a broader bilinear construction; general higher-order
U-statistics outside this form require different computational methods.

At the pairwise level, U-centering turns two familiar test
statistics into regression quantities.  The U-centered dependence
$t$-statistic is the slope $t$-statistic after additive endpoint effects have
been removed.  In the two-sample problem, $\mathcal E_{n,m}/2$ is the slope on
the between-group pair indicator.  The common-endpoint and fully interacted
regressions estimate the same slope, but the fully interacted model adds
$N-2=n+m-2$ endpoint-by-pair-type effects.  These extra effects allow an
observation's endpoint effect to differ between within- and between-group
pairs.  This regression representation explains the pooled denominator in
\citet{ChakrabortyZhang2021} and the alternative restricted denominator.  The
same regression interpretation extends to arrays indexed by
higher-order subsets after all lower-order subset effects have been removed;
see Corollary~\ref{cor:higher-order-subset-regression}.

\appendix

\section{The higher-order ANOVA projection}
\label{app:higher-projection-proof}
\begin{proof}
\smallskip
\noindent\emph{Self-adjointness and zero margins.}
Each operator $D_{j,r}^\ast D_{j,r}$ is self-adjoint, so
$\cU_{n,r}$ is self-adjoint.  We now compute its $(r-1)$-margins.

Fix $S\in\binom{[n]}{r-1}$.  For $0\le j\le r$, write
\[
 m_j(J)=(D_{j,r}A)(J),
 \qquad
 T_j(S)=\sum_{\substack{J\subseteq S\\|J|=j}}m_j(J),
\]
with $T_{-1}(S)=0$.  The contribution of
$D_{j,r}^\ast D_{j,r}A$ to the $S$-margin is
\begin{equation}\label{eq:Lj-def}
 L_j(S)
 :=\sum_{i\notin S}
   \sum_{\substack{J\subseteq S\cup\{i\}\\|J|=j}}m_j(J).
\end{equation}
Split the inner $j$-sets according to whether they contain the new label $i$.
Those not containing $i$ are the $j$-subsets of $S$; each is repeated for all
$n-r+1$ choices of $i\notin S$.  Their total contribution is
$(n-r+1)T_j(S)$.

For $j=0$, there is no second part.  For $j\ge1$, a $j$-set containing
$i$ has the form $K\cup\{i\}$ with $K\subseteq S$ and $|K|=j-1$.
For a fixed such $K$, we evaluate the sum over labels outside $S$.  Since
$K\subseteq S$, the complement of $K$ has the disjoint decomposition
\[
 [n]\setminus K
 =\bigl([n]\setminus S\bigr)\,\dot\cup\,\bigl(S\setminus K\bigr).
\]
Consequently,
\begingroup
\interdisplaylinepenalty=10000
\begin{align}
 \sum_{i\notin S}m_j(K\cup\{i\})
 &=\sum_{i\notin K}m_j(K\cup\{i\})
   -\sum_{u\in S\setminus K}m_j(K\cup\{u\}) \notag\\
 &=\sum_{i\notin K}
   \sum_{\substack{I\in\cI_{n,r}\\I\supseteq K\cup\{i\}}}A_I
   -\sum_{u\in S\setminus K}m_j(K\cup\{u\}) \notag\\
 &=\sum_{\substack{I\in\cI_{n,r}\\I\supseteq K}}
   \sum_{i\in I\setminus K}A_I
   -\sum_{u\in S\setminus K}m_j(K\cup\{u\}) \notag\\
 &=(r-j+1)m_{j-1}(K)
   -\sum_{u\in S\setminus K}m_j(K\cup\{u\}).
 \label{eq:fixed-K-count}
\end{align}
\endgroup
The first equality separates the labels outside $K$ into those outside $S$
and those in $S\setminus K$.  The second equality expands the definition of
$m_j$, and the third reverses the order of summation.  For every fixed
$r$-set $I\supseteq K$, the inner sum contains one copy of $A_I$ for each
$i\in I\setminus K$.  Since $|K|=j-1$,
\[
 |I\setminus K|=r-(j-1)=r-j+1,
\]
so each $A_I$ is counted exactly $r-j+1$ times.  This gives
$(r-j+1)m_{j-1}(K)$.  Finally, subtracting the terms indexed by
$u\in S\setminus K$ removes precisely those labels that are outside $K$ but
still inside $S$, leaving only the labels $i\notin S$ required in
\eqref{eq:Lj-def}.

Next, sum \eqref{eq:fixed-K-count} over all $(j-1)$-subsets $K$ of $S$.
The first term becomes $(r-j+1)T_{j-1}(S)$.  In the final double sum, each
$j$-subset $J\subseteq S$ appears once for each choice of the removed element
$u\in J$, hence exactly $j$ times.  Combining the two parts yields
  \begin{equation*}
 L_j(S)
 =(n-r+1-j)T_j(S)+(r-j+1)T_{j-1}(S).
\end{equation*}

Because
\[
 D_{r-1,r}(\cU_{n,r}A)(S)=\sum_{j=0}^r c_{n,r,j}L_j(S),
\]
the coefficient of $T_j(S)$, for $0\le j\le r-1$, is
\[
 (n-r+1-j)c_{n,r,j}+(r-j)c_{n,r,j+1}.
\]
Setting these coefficients to zero yields
\[
 c_{n,r,j+1}
 =-\frac{n-r-j+1}{r-j}c_{n,r,j},
 \qquad 0\le j<r.
\]
This is the recurrence stated before Definition~\ref{def:higher-u}; the
coefficients used there satisfy it with $c_{n,r,r}=1$.  Since $T_r(S)=0$
for $|S|=r-1$, all terms cancel.  Therefore
$D_{r-1,r}(\cU_{n,r}A)=0$, proving that the range of $\cU_{n,r}$ is
contained in $\cH_{n,r}$.

\medskip
\noindent\emph{Identity on the residual space.}
Conversely, suppose $A\in\cH_{n,r}$.  For a $j$-set $J$ with $j<r-1$,
sum the zero $(r-1)$-margins over all $(r-1)$-sets $K$ containing $J$:
\[
 0=\sum_{\substack{K\supseteq J\\|K|=r-1}}
   (D_{r-1,r}A)(K)
  =(r-j)(D_{j,r}A)(J).
\]
Each $r$-set containing $J$ is counted $r-j$ times, so every lower margin
$D_{j,r}A$ vanishes.  In \eqref{eq:higher-u-operator}, only the $j=r$ term
remains, and that term is $A$.  Hence $\cU_{n,r}A=A$ on $\cH_{n,r}$.

\medskip
\noindent\emph{Projection and kernel.}
A self-adjoint map whose range lies in $\cH_{n,r}$ and which is the identity on
$\cH_{n,r}$ is the orthogonal projection onto that space.  Its kernel is
therefore $\cH_{n,r}^{\perp}=\cL_{n,r}$.

\medskip
\noindent\emph{Dimension.}
The inclusion matrix $D_{r-1,r}$ has full row rank
$\binom n{r-1}$ when $n\ge2r-1$; this is the classical rank theorem of
\citet{Gottlieb1966}.  Therefore
\[
 \dim\cH_{n,r}
 =\dim\ker(D_{r-1,r})
 =\binom nr-\binom n{r-1},
\]
which is \eqref{eq:general-df}.
\end{proof}

\section{\texorpdfstring{{RKHS identities for four-argument matching kernels}}{RKHS identities for four-argument matching kernels}}
\label{app:rkhs}
\noindent\emph{Centered feature maps.}
Let $k$ and $\ell$ be the bounded measurable positive-definite kernels from Section~\ref{sec:example}, with separable RKHSs $\mathcal F$ and
$\mathcal G$.  Their canonical feature maps are
\[
 \phi(x)=k(x,\cdot)\in\mathcal F,
 \qquad
 \psi(y)=\ell(y,\cdot)\in\mathcal G.
\]
The reproducing property gives
\[
 k(x,x')=\inner{\phi(x)}{\phi(x')}_{\mathcal F},
 \qquad
 \ell(y,y')=\inner{\psi(y)}{\psi(y')}_{\mathcal G}.
\]
Boundedness ensures that the mean embeddings
$\mu_X=\E\phi(X)$ and $\mu_Y=\E\psi(Y)$ exist.  With
$\phi_c=\phi-\mu_X$ and $\psi_c=\psi-\mu_Y$, the population-centered kernels
satisfy
\[
 k_c(x,x')=\inner{\phi_c(x)}{\phi_c(x')}_{\mathcal F},
 \qquad
 \ell_c(y,y')=\inner{\psi_c(y)}{\psi_c(y')}_{\mathcal G}.
\]

\medskip
\noindent\emph{Fourth Hoeffding projections.}
In each matching term, the two pair kernels involve disjoint arguments, so
the full four-coordinate Hoeffding projection replaces each raw pair kernel
by its second Hoeffding component.  Let $\mathbb E_i$ denote integration in
the $i$th argument.  For example,
\[
 \left\{\prod_{i=1}^4(I-\mathbb E_i)\right\}
       \{k_{12}k_{34}\}
 =\{(I-\mathbb E_1)(I-\mathbb E_2)k_{12}\}
  \{(I-\mathbb E_3)(I-\mathbb E_4)k_{34}\}
 =k_{c,12}k_{c,34}.
\]
Integrating out any one argument annihilates the unique centered factor
containing that argument, which also verifies canonicality.  Relabeling the
arguments and averaging over the three matchings gives the two displayed
fourth-component formulas in Section~\ref{sec:example}.

\medskip
\noindent\emph{Cross-covariance operator.}
Recall from Section~\ref{sec:example} that the cross-covariance operator is
\[
 C_{XY}=\E\{\phi_c(X)\otimes\psi_c(Y)\}.
\]
It is well defined in the Hilbert--Schmidt class: boundedness makes the centered
features uniformly bounded, while separability supplies the required
measurability.  Consequently, $C_{XY}^{\ast}C_{XY}$ is positive and
trace-class.  The operator also satisfies, for $f\in\mathcal F$ and
$g\in\mathcal G$,
\[
 \inner{f}{C_{XY}g}_{\mathcal F}
 =\E\!\left[
   \inner{f}{\phi_c(X)}_{\mathcal F}
   \inner{\psi_c(Y)}{g}_{\mathcal G}
 \right]
 =\Cov\{f(X),g(Y)\}.
\]

\medskip
\noindent\emph{Traces and spectral identities.}
For a positive trace-class operator $T$ on $\mathcal G$, its trace is
\[
 \tr(T)=\sum_q\inner{f_q}{Tf_q}_{\mathcal G},
\]
where the value is independent of the orthonormal basis $(f_q)$.
The definitions in Section~\ref{sec:example} give
\[
 \norm{C_{XY}}_{\mathrm{HS}}^2
 =\tr(C_{XY}^{\ast}C_{XY})
 =\sum_j s_j^2,
 \qquad
 \tr\{(C_{XY}^{\ast}C_{XY})^2\}=\sum_j s_j^4.
\]
Finally, for independent copies $(X_1,Y_1)$ and $(X_2,Y_2)$ of $(X,Y)$,
\[
 \norm{C_{XY}}_{\mathrm{HS}}^2
 =\E\{k_c(X_1,X_2)\ell_c(Y_1,Y_2)\}.
\]
The two coordinates within each pair may remain dependent.  These identities
justify \eqref{eq:H2-spectrum}.

\medskip
\noindent\emph{Matching expansion for \eqref{eq:H4-spectrum}.}
For the representative four-cycle term in Section~\ref{sec:example}, choose
orthonormal bases $(e_p)$ and $(f_q)$ of $\mathcal F$ and $\mathcal G$, and
set
\[
 c_{pq}
 =\E\!\left[
   \inner{\phi_c(X)}{e_p}
 \inner{\psi_c(Y)}{f_q}
 \right].
\]
Put
\[
 \xi_{ip}=\inner{\phi_c(X_i)}{e_p},
 \qquad
 \zeta_{iq}=\inner{\psi_c(Y_i)}{f_q}.
\]
Parseval's identity gives
\[
 k_{c,ij}=\sum_p\xi_{ip}\xi_{jp},
 \qquad
 \ell_{c,ij}=\sum_q\zeta_{iq}\zeta_{jq}.
\]
For finite truncations of the basis expansions, the calculation below is
elementary.  Each truncated kernel expansion is bounded by the corresponding
product of centered feature norms, and these norms are uniformly bounded
because the kernels are bounded.  Dominated convergence therefore permits
passage to the infinite sums.  Using independence across the four
paired-observation labels, we obtain
\begin{align*}
 &\E\{k_{c,12}k_{c,34}\ell_{c,13}\ell_{c,24}\}\\
 &\quad=
 \sum_{p,p',q,q'}
 \E(\xi_{1p}\zeta_{1q})
 \E(\xi_{2p}\zeta_{2q'})
 \E(\xi_{3p'}\zeta_{3q})
 \E(\xi_{4p'}\zeta_{4q'})\\
 &\quad=
 \sum_{p,p',q,q'}
 c_{pq}c_{pq'}c_{p'q}c_{p'q'}.
\end{align*}
Here dependence between $X_i$ and $Y_i$ within a pair is retained.  Since
$c_{pq}=\inner{C_{XY}f_q}{e_p}$, let
\[
 T:=C_{XY}^{\ast}C_{XY}:\mathcal G\to\mathcal G.
\]
This is a positive self-adjoint operator.  Its coefficients relative to the
orthonormal basis $(f_q)$ are
\[
\begin{aligned}
 t_{qq'}
 &:=\inner{Tf_{q'}}{f_q}_{\mathcal G}
 =\inner{C_{XY}f_{q'}}{C_{XY}f_q}_{\mathcal F}\\
 &=\sum_p c_{pq'}c_{pq}.
\end{aligned}
\]
Therefore
\[
 \sum_{p,p',q,q'}c_{pq}c_{pq'}c_{p'q}c_{p'q'}
 =\sum_{q,q'}t_{qq'}t_{q'q}
 =\tr(T^2)
 =\sum_{j\ge1}s_j^4.
\]
The other five four-cycle terms agree by relabeling, establishing the
identity used in \eqref{eq:H4-spectrum}.

\section{Direct proofs for the pairwise case}
\label{app:pairwise-proofs}

\subsection{Normal equations, range, and null space
  (Corollary~\ref{cor:pair-basic})}
\label{app:proof-pair-basic}
\begin{proof}
Edge vectorization is an isometry from
$\cV_{n,2}$, equipped with $\inner{\cdot}{\cdot}_E$, to
$\R^{\binom{n}{2}}$.  For
$C\in\cV_{n,2}$, let $c=\operatorname{vec}_E(C)$.  Then
\[
 (Z^\mathsf Tc)_i=\sum_{j\ne i}C_{ij}.
\]
Thus $\ker(Z^\mathsf T)$ corresponds exactly to $\cH_{n,2}$.  Likewise,
$\operatorname{col}(Z)$ corresponds exactly to $\cL_{n,2}$ because
$(Zu)_{\{i,j\}}=u_i+u_j$.  Since $Z^\mathsf TZ$ is nonsingular, the matrix
$M_{n,2}$ in \eqref{eq:pair-projection} is the orthogonal projector onto
$\ker(Z^\mathsf T)$ and has kernel $\operatorname{col}(Z)$.  Translating back
to arrays gives
\[
 \cV_{n,2}=\cL_{n,2}\oplus\cH_{n,2},\qquad
 \range(\cU_{n,2})=\cH_{n,2},\qquad
 \ker(\cU_{n,2})=\cL_{n,2}.
\]
An orthogonal projector is self-adjoint and idempotent.  Every element of
$\cH_{n,2}$ has zero row sums, and symmetry gives zero column sums.  Finally,
the orthogonal decomposition of $A$ into its $\cL_{n,2}$ and $\cH_{n,2}$
components proves the stated existence and uniqueness characterization.
\end{proof}

\subsection{Pairwise overlap identity
  (Proposition~\ref{prop:pair-overlap})}
\label{app:proof-pair-overlap}
\begin{proof}
Expand both raw kernels using \eqref{eq:pair-hoeffding}.  In any product term,
if a sample label appears in a nonconstant Hoeffding component on only one
side, integrate over that label while holding the others fixed; canonicality
makes the expectation zero.  Consequently, with no shared label only the
overall means remain.  With one shared label, the pairing of the first
Hoeffding components also remains, and with two shared labels, the pairing of
the second components remains as well.  These three cases give the stated
identities and their weighted difference \eqref{eq:theta2-overlap}.
\end{proof}

\subsection{Conditional interaction interpretation
  (Proposition~\ref{prop:pair-conditional})}
\label{app:proof-pair-conditional}
\begin{proof}
Let $m_a(x)=\E\{a(x,X)\}$.  Conditional on $X_i,X_j$,
\[
 \E(A_{i\cdot}\mid X_i,X_j)
 =a(X_i,X_j)+(n-2)m_a(X_i),
\]
and similarly for $A_{j\cdot}$.  The ordered grand sum has conditional mean
\[
 2a(X_i,X_j)+2(n-2)\{m_a(X_i)+m_a(X_j)\}
 +(n-2)(n-3)a_0.
\]
Substitution into \eqref{eq:pairwise-u} gives
\eqref{eq:pair-conditional}.
\end{proof}

\section{\texorpdfstring{{Auxiliary proofs for the general $r$-subset theory}}{Auxiliary proofs for the general r-subset theory}}
\label{app:general-proofs}

\subsection{Overlap sums from margin cross-products
  (Proposition~\ref{prop:margin-inversion})}
\label{app:proof-margin-inversion}
\begin{proof}
Expanding the two margins and reversing the order of summation gives
\begin{align*}
 C_k(A,B)
 &=\sum_{K\in\binom{[n]}k}
   \sum_{I\supseteq K}\sum_{J\supseteq K}A_I B_J\\
 &=\sum_{I,J\in\cI_{n,r}}
   \#\{K\in\tbinom{[n]}k:K\subseteq I\cap J\}\,A_I B_J\\
 &=\sum_{I,J\in\cI_{n,r}}\binom{|I\cap J|}{k}A_I B_J,
\end{align*}
which is \eqref{eq:Ck-Sigma}.  Equation
\eqref{eq:Sigma-Ck-inversion} follows from the binomial-inversion identity
\[
 \sum_{k=s}^t(-1)^{k-s}\binom{k}{s}\binom{t}{k}
 =\mathbf 1\{t=s\}.
\]
\end{proof}

\subsection{Closed form for the projection entries
  (Lemma~\ref{lem:p-s})}
\label{app:p-s-proof}
\begin{proof}
To prove \eqref{eq:p-s-closed}, recall that $\kappa=n-2r+1$ and write
$q=s-j$ in \eqref{eq:p-s-def} to obtain
\[
 p_{n,r,s}
 =(-1)^{r-s}\sum_{q=0}^s
 \frac{(-1)^q\binom{s}{q}}
      {\binom{\kappa+r-s+q}{r-s+q}}.
\]
For every integer $m\ge0$,
\[
\frac{1}{\binom{\kappa+m}{m}}
 =\kappa\int_0^1t^m(1-t)^{\kappa-1}\,dt,
\]
\begin{samepage}
so
\begin{align*}
 p_{n,r,s}
 &=(-1)^{r-s}\kappa\int_0^1
   t^{r-s}(1-t)^{\kappa-1}
   \sum_{q=0}^s(-1)^q\binom sqt^q\,dt\\
 &=(-1)^{r-s}\kappa\int_0^1
   t^{r-s}(1-t)^{\kappa+s-1}\,dt,
\end{align*}
\end{samepage}
The beta-integral identity gives
\[
 \kappa\int_0^1 t^{r-s}(1-t)^{\kappa+s-1}\,dt
 =\kappa B(r-s+1,\kappa+s)
 =\frac{\kappa(r-s)!(\kappa+s-1)!}{(\kappa+r)!},
\]
which proves \eqref{eq:p-s-closed}.  For the normalization identity
\eqref{eq:group-coeff}, note that
\[
 d_{n,r}=\binom nr\frac{\kappa}{n-r+1}.
\]
Substituting \eqref{eq:N-s} and \eqref{eq:p-s-closed}, and using
\[
 \binom{n-r}{r-s}(r-s)!
 =\frac{(n-r)!}{(n-2r+s)!},
\]
gives \eqref{eq:group-coeff} after cancellation.
\end{proof}

\subsection{\texorpdfstring{{Conditional recovery of the $r$th Hoeffding component
  (Corollary~\ref{cor:general-conditional})}}{Conditional recovery of the rth Hoeffding component}}
\label{app:proof-general-conditional}
\begin{proof}
Decompose the random array using \eqref{eq:general-hoeffding-decomp}.  For each
$j<r$, the $j$th-order array contribution
\[
 I\longmapsto
 \sum_{\substack{S\subseteq I\\|S|=j}}a_j(X_S)
\]
belongs to $\cL_{n,r}$ and is therefore annihilated by $\cU_{n,r}$.  Write
$A_I^{(r)}=a_r(X_I)$.  Then
\[
 \cU_{n,r}A=\cU_{n,r}A^{(r)}.
\]
Conditional on $X_I$, the canonicality of $a_r$ implies
$\E\{a_r(X_J)\mid X_I\}=0$ whenever $J\ne I$.  Therefore only the diagonal
projection entry remains.  By permutation symmetry, all
$\binom nr$ diagonal entries of the matrix representing $\cU_{n,r}$ are
equal.  Since $\cU_{n,r}$ is an orthogonal projection onto the
$d_{n,r}$-dimensional space $\cH_{n,r}$ by
Theorem~\ref{thm:higher-projection}, the sum of its diagonal entries is
$d_{n,r}$; hence each diagonal entry is
$d_{n,r}/\binom nr$.  This proves \eqref{eq:general-conditional}.
\end{proof}

\section{Algebraic proofs for the two-sample edge regressions}
\label{app:two-sample-nesting}

We retain the notation of Section~\ref{sec:edge-regression}.
The dependence regression follows from
Proposition~\ref{prop:general-edge-regression}, with $W=Z$, together with
Theorem~\ref{thm:pair-anova}.  We therefore focus on
Propositions~\ref{prop:energy-group-slope} and~\ref{prop:energy-anova-t}.
We first project the between-group pair indicator in the common-endpoint
model.  We then analyze the fully interacted model block by block, recover
its pooled studentization, and compare the two fitted spaces.

\noindent\emph{Projection of the between-group indicator and the common
slope.}
Recall that $M_{N,2}$ projects onto $\ker(Z_N^\mathsf T)$, the space
orthogonal to all additive endpoint effects.  To prove
\eqref{eq:ucentered-group-indicator}, we show that
$\ell_E/a_{n,m}^2$ belongs to this residual space and that its difference
from $c_E$ belongs to $\operatorname{col}(Z_N)$.

The $i$th entry of $Z_N^\mathsf T\ell_E$ is the sum of the entries of
$\ell_E$ over the edges incident to label $i$.  For an $X$-sample label and
a $Y$-sample label, respectively, these sums are
\[
 (n-1)\left\{-\frac1{n(n-1)}\right\}
 +m\left(\frac1{nm}\right)=0,
 \qquad
 (m-1)\left\{-\frac1{m(m-1)}\right\}
 +n\left(\frac1{nm}\right)=0.
\]
Hence $Z_N^\mathsf T\ell_E=0$, and therefore
$\ell_E\in\ker(Z_N^\mathsf T)=\range(M_{N,2})$.  Moreover, only the $nm$
between-group edges contribute to $c_E^\mathsf T\ell_E$, so
\[
 c_E^\mathsf T\ell_E=nm\left(\frac1{nm}\right)=1,
 \qquad
 \norm{\ell_E}^2=a_{n,m}^2.
\]

It remains to show that the difference between $c_E$ and
$\ell_E/a_{n,m}^2$ is an additive endpoint effect.  Set
\[
 u_X=\frac{1}{2n(n-1)a_{n,m}^2},
 \qquad
 u_Y=\frac{1}{2m(m-1)a_{n,m}^2}.
\]
On an $XX$ edge, the entry of $c_E-\ell_E/a_{n,m}^2$ is $2u_X$, and on a
$YY$ edge it is $2u_Y$.  On an $XY$ edge, the definition of $a_{n,m}^2$
gives
\[
 1-\frac{1}{nm a_{n,m}^2}=u_X+u_Y.
\]
Let $u\in\R^N$ equal $u_X$ on the $X$-sample labels and $u_Y$ on the
$Y$-sample labels.  For an edge $\{i,j\}$, the corresponding entry of
$Z_Nu$ is $u_i+u_j$.  The three edge-type calculations above therefore give
\[
 c_E-\frac{\ell_E}{a_{n,m}^2}=Z_Nu.
\]
This is the orthogonal decomposition of $c_E$ into a vector in
$\range(M_{N,2})$ and a vector in
$\ker(M_{N,2})=\operatorname{col}(Z_N)$.  Applying $M_{N,2}$ proves
\eqref{eq:ucentered-group-indicator}.

Using $M_{N,2}c_E=\ell_E/a_{n,m}^2$,
$c_E^\mathsf T\ell_E=1$, and the symmetry of $M_{N,2}$, the
Frisch--Waugh--Lovell formula gives
\[
 \widehat\theta
 =\frac{c_E^\mathsf TM_{N,2}a_E}
        {c_E^\mathsf TM_{N,2}c_E}
 =\frac{\ell_E^\mathsf Ta_E/a_{n,m}^2}
        {1/a_{n,m}^2}
 =\ell_E^\mathsf Ta_E
 =\overline A_{XY}-\frac{\overline A_{XX}+\overline A_{YY}}2
 =\frac{\mathcal E_{n,m}}2.
\]
The penultimate equality follows by grouping the entries of
$\ell_E^\mathsf Ta_E$ over the $XX$, $XY$, and $YY$ blocks.

By Theorem~\ref{thm:pair-anova}, the edge vectors of $\widetilde A$ and
$\widetilde C$ are $M_{N,2}a_E$ and $M_{N,2}c_E$, respectively.  Because
$M_{N,2}$ is an orthogonal projector,
\[
 \inner{\widetilde A}{\widetilde C}_E
 =(M_{N,2}a_E)^\mathsf T(M_{N,2}c_E)
 =\frac{\ell_E^\mathsf Ta_E}{a_{n,m}^2}
 =\frac{\mathcal E_{n,m}}{2a_{n,m}^2},
\]
which proves \eqref{eq:energy-as-ucentered-label-covariance}.

The same calculation also gives the restricted-model residual sum of
squares.  Adding the residualized predictor $M_{N,2}c_E$ reduces the
endpoint-only residual sum of squares by
\[
 \frac{(c_E^\mathsf TM_{N,2}a_E)^2}
      {c_E^\mathsf TM_{N,2}c_E}
 =\frac{\widehat\theta^2}{a_{n,m}^2}.
\]
This proves the formula for $\mathrm{RSS}_R$ in
\eqref{eq:restricted-two-sample-rss}.  The matrix $Z_N$ has rank $N$, and
$M_{N,2}c_E=\ell_E/a_{n,m}^2\ne0$, so adding $c_E$ raises the fitted rank to
$N+1$.  The restricted residual degrees of freedom are therefore
$\binom N2-N-1$, as stated in \eqref{eq:restricted-two-sample-rss}.  Moreover,
$(c_E^\mathsf TM_{N,2}c_E)^{-1}=a_{n,m}^2$ is the least-squares variance
factor for $\widehat\theta$.  Provided $\mathrm{RSS}_R>0$, its usual
standard error is therefore
$a_{n,m}\sqrt{\mathrm{RSS}_R/\nu_R}$, which gives
\eqref{eq:restricted-energy-t}.

\smallskip
\noindent\emph{Blockwise solution of the fully interacted model.}
Write the three block means as
\[
 \mu_{XX}=\eta+\delta,
 \qquad
 \mu_{XY}=\eta+\theta,
 \qquad
 \mu_{YY}=\eta-\delta.
\]
This is a one-to-one reparametrization because
\[
 \eta=\frac{\mu_{XX}+\mu_{YY}}2,
 \qquad
 \delta=\frac{\mu_{XX}-\mu_{YY}}2,
 \qquad
 \theta=\mu_{XY}-\frac{\mu_{XX}+\mu_{YY}}2.
\]
The $X$- and $Y$-group $\alpha$ effects occur only in the $XX$ and $YY$
blocks, respectively, while the $\beta$ effects occur only as row and column
effects in the $XY$ block.  Because the three blocks contain disjoint pair
entries, the full least-squares criterion is a sum of three criteria that can
be minimized separately.

Each centered endpoint-effect term has average zero in its block.  Indeed,
every $\alpha_i$ appears in $n-1$ edges of the $XX$ block, so the sum of the
endpoint terms over that block is
$(n-1)\sum_{i\le n}\alpha_i=0$.  The same argument applies to the $YY$ block.
In the $XY$ block, the corresponding sum is
$m\sum_{i\le n}\beta_i+n\sum_{j>n}\beta_j=0$.
The fitted block means are therefore
$\overline A_{XX}$, $\overline A_{XY}$, and $\overline A_{YY}$.  Substituting
them into the expression for $\theta$ gives
\[
 \widehat\theta
 =\overline A_{XY}
  -\frac{\overline A_{XX}+\overline A_{YY}}2
 =\ell_E^\mathsf Ta_E
 =\frac{\mathcal E_{n,m}}2.
\]

For the rectangular $XY$ block, fitting a mean together with centered row
and column effects is ordinary two-way additive ANOVA.  Double-centering
subtracts the fitted row and column effects, with the grand mean restored.
For the symmetric $XX$ and $YY$ blocks, fitting a mean and additive endpoint
effects is the pairwise ANOVA problem in Theorem~\ref{thm:pair-anova}.  The
endpoint incidence space already contains the constant vector, so
U-centering removes both the block mean and the endpoint effects.  The three
residuals are thus
\[
 \Pi_nA^{XY}\Pi_m,
 \qquad
 M_{n,2}\operatorname{vec}_E(A^{XX}),
 \qquad
 M_{m,2}\operatorname{vec}_E(A^{YY}).
\]
The first residual is a rectangular matrix whose squared norm is the
Frobenius norm; the other two are upper-triangular edge vectors.  Their
squared norms therefore add to \eqref{eq:stacked-full-rss} without any
additional factor of two.

The fitted dimensions of the $XX$, $XY$, and $YY$ blocks are, respectively,
$n$, $N-1$, and $m$.  Hence the corresponding residual spaces have
dimensions
\[
 d_{n,2}=\binom n2-n,
 \qquad
 (n-1)(m-1),
 \qquad
 d_{m,2}=\binom m2-m.
\]
Their sum is
\[
 \nu_F=(n-1)(m-1)+d_{n,2}+d_{m,2}
 =\binom N2-2N+1,
\]
which proves \eqref{eq:stacked-two-sample-df}.  Equivalently, the full fitted
space has rank $n+(N-1)+m=2N-1$.

\smallskip
\noindent\emph{Recovery of the pooled studentization.}
For the same dissimilarity convention as in
\citet{ChakrabortyZhang2021}, let $v_s=s(s-3)/2=d_{s,2}$, and let
$\widetilde A^{XX}$ and $\widetilde A^{YY}$ denote the U-centered
within-group arrays.  In their notation, the cross-block quantity and the two
within-sample squared distance variances are
\begin{align*}
 \operatorname{cdCov}_{n,m}^2(X,Y)
 &=\frac{\norm{\Pi_nA^{XY}\Pi_m}_F^2}{(n-1)(m-1)}
   =\frac{\mathrm{RSS}_{XY}}{(n-1)(m-1)},\\
 \widetilde{\mathcal D}_n^2(X,X)
 &=\frac{1}{n(n-3)}\sum_{i\ne j}(\widetilde A^{XX}_{ij})^2
   =\frac{\mathrm{RSS}_{XX}}{v_n},\\
 \widetilde{\mathcal D}_m^2(Y,Y)
 &=\frac{1}{m(m-3)}\sum_{i\ne j}(\widetilde A^{YY}_{ij})^2
   =\frac{\mathrm{RSS}_{YY}}{v_m}.
\end{align*}
The rectangular $XY$ array lists each cross edge once.  By contrast, the two
within-sample definitions sum over ordered pairs, so symmetry counts every
within-group edge twice.  Substituting the three identities above into the
pooled variance estimator of \citet{ChakrabortyZhang2021} gives
\begin{equation}
 S_{n,m}
 =\frac{4(n-1)(m-1)\operatorname{cdCov}_{n,m}^2(X,Y)
          +4v_n\widetilde{\mathcal D}_n^2(X,X)
          +4v_m\widetilde{\mathcal D}_m^2(Y,Y)}
         {(n-1)(m-1)+v_n+v_m}.
 \label{eq:cz-pooled-variance}
\end{equation}
The three identities show that its numerator is
$4(\mathrm{RSS}_{XY}+\mathrm{RSS}_{XX}+\mathrm{RSS}_{YY})$ and its
denominator is $\nu_F$.  Hence $S_{n,m}=4\mathrm{MS}_F$.
The block-mean calculation above writes
$\widehat\theta=\ell_E^\mathsf Ta_E$, so $\ell_E$ is its least-squares
coefficient-weight vector.  Since $\norm{\ell_E}^2=a_{n,m}^2$, the usual
homoskedastic least-squares formula gives, provided $\mathrm{MS}_F>0$,
\[
 \widehat{\operatorname{se}}(\widehat\theta)
 =a_{n,m}\sqrt{\mathrm{MS}_F}.
\]
Consequently,
\[
 \widehat{\operatorname{se}}(\mathcal E_{n,m})
 =2a_{n,m}\sqrt{\mathrm{MS}_F}
 =a_{n,m}\sqrt{S_{n,m}},
\]
and dividing $\mathcal E_{n,m}$ by this quantity gives
\eqref{eq:energy-anova-t}.

\smallskip
\noindent\emph{Nesting of the two regressions.}
Consider a generic fitted value from the restricted model.  Define
\[
 \overline\gamma_X=\frac1n\sum_{i\le n}\gamma_i,
 \qquad
 \overline\gamma_Y=\frac1m\sum_{i>n}\gamma_i,
\]
and write
\[
 d_i=
 \begin{cases}
  \gamma_i-\overline\gamma_X,&i\le n,\\
  \gamma_i-\overline\gamma_Y,&i>n.
 \end{cases}
 \qquad
 \sum_{i\le n}d_i=\sum_{i>n}d_i=0.
\]
The restricted fitted values on $XX$, $XY$, and $YY$ edges are, respectively,
\[
 2\overline\gamma_X+d_i+d_j,
 \qquad
 \overline\gamma_X+\overline\gamma_Y+\theta+d_i+d_j,
 \qquad
 2\overline\gamma_Y+d_i+d_j.
\]
The fully interacted model reproduces these values by taking
$\alpha_i=\beta_i=d_i$,
$\eta=\overline\gamma_X+\overline\gamma_Y$, and
$\delta=\overline\gamma_X-\overline\gamma_Y$, while leaving $\theta$
unchanged.  These choices satisfy all four zero-sum constraints.  Thus the
restricted fitted space is contained in the fully interacted fitted space.

The restricted fitted rank is $N+1$, as shown above, while the full fitted
rank is $2N-1$.  The fully interacted model therefore adds $N-2$ fitted
directions, and
\[
 \nu_R-\nu_F=N-2.
\]
Let $\widehat f_R$ and $\widehat f_F$ denote the two fitted edge vectors.
Nested least squares gives the orthogonal decomposition
\[
 a_E-\widehat f_R
 =(a_E-\widehat f_F)+(\widehat f_F-\widehat f_R).
\]
Therefore
\[
 \mathrm{RSS}_R
 =\mathrm{RSS}_F+\mathrm{SS}_{\mathrm{int}},
 \qquad
 \mathrm{SS}_{\mathrm{int}}
 :=\norm{\widehat f_F-\widehat f_R}^2\ge0.
\]
The second term is the additional variation explained by allowing endpoint
effects to differ between within- and between-group pairs.  This proves
\eqref{eq:nested-rss-decomposition}.

Together, these calculations prove
Propositions~\ref{prop:energy-group-slope} and~\ref{prop:energy-anova-t}, as
well as the restricted-model formulas
\eqref{eq:restricted-two-sample-rss}--\eqref{eq:restricted-energy-t}.

\section*{AI-use statement}
During the preparation of this manuscript, the author used GPT-5.6 Sol for
language editing and exploratory assistance with proof
development.  The author independently checked every definition, statement,
and proof and takes full responsibility for the content of the manuscript.


\begin{thebibliography}{99}
\small

\bibitem[Bloznelis and G\"otze(2001)]{BloznelisGotze2001}
Bloznelis, M. and G\"otze, F. (2001).
Orthogonal decomposition of finite population statistics and its applications
to distributional asymptotics.
\emph{Annals of Statistics}, \textbf{29}, 899--917.
\url{https://doi.org/10.1214/aos/1009210694}.

\bibitem[Chakraborty and Zhang(2021)]{ChakrabortyZhang2021}
Chakraborty, S. and Zhang, X. (2021).
A new framework for distance and kernel-based metrics in high dimensions.
\emph{Electronic Journal of Statistics}, \textbf{15}, 5455--5522.
\url{https://doi.org/10.1214/21-EJS1889}.


 \bibitem[Dunkl(1978)]{Dunkl1978}
Dunkl, C. F. (1978).
An addition theorem for Hahn polynomials: The spherical functions.
\emph{SIAM Journal on Mathematical Analysis}, \textbf{9}, 627--637.
\url{https://doi.org/10.1137/0509043}.

\bibitem[Filmus(2016)]{Filmus2016}
Filmus, Y. (2016).
An orthogonal basis for functions over a slice of the Boolean hypercube.
\emph{Electronic Journal of Combinatorics}, \textbf{23}(1), P1.23.
\url{https://doi.org/10.37236/4567}.

\bibitem[Gao and Shao(2023)]{GaoShao2023}
Gao, H. and Shao, X. (2023).
Two sample testing in high dimension via maximum mean discrepancy.
\emph{Journal of Machine Learning Research}, \textbf{24}(304), 1--33.
\url{https://www.jmlr.org/papers/v24/22-1136.html}.

 \bibitem[Gottlieb(1966)]{Gottlieb1966}
Gottlieb, D. H. (1966).
A certain class of incidence matrices.
\emph{Proceedings of the American Mathematical Society}, \textbf{17},
1233--1237.
\url{https://doi.org/10.1090/S0002-9939-1966-0204305-9}.

\bibitem[Gretton et~al.(2005)Gretton, Bousquet, Smola, and Sch\"olkopf]{GrettonEtAl2005}
Gretton, A., Bousquet, O., Smola, A., and Sch\"olkopf, B. (2005).
Measuring statistical dependence with Hilbert--Schmidt norms.
In \emph{Algorithmic Learning Theory}, Lecture Notes in Computer Science,
\textbf{3734}, 63--77. Springer.
\url{https://doi.org/10.1007/11564089_7}.

\bibitem[Gretton et~al.(2007)Gretton, Fukumizu, Teo, Song, Sch\"olkopf, and Smola]{GrettonEtAl2008}
Gretton, A., Fukumizu, K., Teo, C. H., Song, L., Sch\"olkopf, B., and Smola,
A. J. (2007).
A kernel statistical test of independence.
In \emph{Advances in Neural Information Processing Systems 20 (NIPS 2007)}, 585--592.

\bibitem[Hoeffding(1948)]{Hoeffding1948}
Hoeffding, W. (1948).
A class of statistics with asymptotically normal distribution.
\emph{Annals of Mathematical Statistics}, \textbf{19}, 293--325.
 \url{https://doi.org/10.1214/aoms/1177730196}.


\bibitem[Huo and Sz\'ekely(2016)]{HuoSzekely2016}
Huo, X. and Sz\'ekely, G. J. (2016).
Fast computing for distance covariance.
\emph{Technometrics}, \textbf{58}, 435--447.
\url{https://doi.org/10.1080/00401706.2015.1054435}.

\bibitem[Kenny and La~Voie(1984)]{KennyLaVoie1984}
Kenny, D. A. and La Voie, L. (1984).
The social relations model.
\emph{Advances in Experimental Social Psychology}, \textbf{18}, 141--182.
\url{https://doi.org/10.1016/S0065-2601(08)60144-6}.

 \bibitem[Peccati(2004)]{Peccati2004}
Peccati, G. (2004).
Hoeffding--ANOVA decompositions for symmetric statistics of exchangeable
observations.
\emph{Annals of Probability}, \textbf{32}, 1796--1829.
\url{https://doi.org/10.1214/009117904000000405}.

 \bibitem[Sejdinovic et~al.(2013)Sejdinovic, Sriperumbudur, Gretton, and Fukumizu]{SejdinovicEtAl2013}
Sejdinovic, D., Sriperumbudur, B., Gretton, A., and Fukumizu, K. (2013).
Equivalence of distance-based and RKHS-based statistics in hypothesis testing.
\emph{Annals of Statistics}, \textbf{41}, 2263--2291.
\url{https://doi.org/10.1214/13-AOS1140}.

\bibitem[Serfling(1980)]{Serfling1980}
Serfling, R. J. (1980).
\emph{Approximation Theorems of Mathematical Statistics}.
Wiley, New York.

\bibitem[Shao and Zhang(2014)]{ShaoZhang2014}
Shao, X. and Zhang, J. (2014).
Martingale difference correlation and its use in high-dimensional variable
screening.
\emph{Journal of the American Statistical Association}, \textbf{109},
1302--1318.
\url{https://doi.org/10.1080/01621459.2014.887012}.

\bibitem[Song et~al.(2012)Song, Smola, Gretton, Bedo, and Borgwardt]{SongEtAl2012}
Song, L., Smola, A., Gretton, A., Bedo, J., and Borgwardt, K. (2012).
Feature selection via dependence maximization.
\emph{Journal of Machine Learning Research}, \textbf{13}, 1393--1434.

\bibitem[Szab\'o and Sriperumbudur(2018)]{SzaboSriperumbudur2018}
Szab\'o, Z. and Sriperumbudur, B. K. (2018).
Characteristic and universal tensor product kernels.
\emph{Journal of Machine Learning Research}, \textbf{18}(233), 1--29.
\url{https://jmlr.org/papers/v18/17-492.html}.

\bibitem[Sz\'ekely and Rizzo(2013)]{SzekelyRizzo2013}
Sz\'ekely, G. J. and Rizzo, M. L. (2013).
The distance correlation $t$-test of independence in high dimension.
\emph{Journal of Multivariate Analysis}, \textbf{117}, 193--213.
\url{https://doi.org/10.1016/j.jmva.2013.02.012}.

\bibitem[Sz\'ekely and Rizzo(2014)]{SzekelyRizzo2014}
Sz\'ekely, G. J. and Rizzo, M. L. (2014).
Partial distance correlation with methods for dissimilarities.
\emph{Annals of Statistics}, \textbf{42}, 2382--2412.
\url{https://doi.org/10.1214/14-AOS1255}.

\bibitem[Sz\'ekely et~al.(2007)Sz\'ekely, Rizzo, and Bakirov]{SzekelyRizzoBakirov2007}
Sz\'ekely, G. J., Rizzo, M. L., and Bakirov, N. K. (2007).
Measuring and testing dependence by correlation of distances.
\emph{Annals of Statistics}, \textbf{35}, 2769--2794.
\url{https://doi.org/10.1214/009053607000000505}.

\bibitem[Warner et~al.(1979)Warner, Kenny, and Stoto]{WarnerKennyStoto1979}
Warner, R. M., Kenny, D. A., and Stoto, M. (1979).
A new round robin analysis of variance for social interaction data.
\emph{Journal of Personality and Social Psychology}, \textbf{37}(10),
1742--1757.
\url{https://doi.org/10.1037/0022-3514.37.10.1742}.

\bibitem[Wang and Lindsay(2014)]{WangLindsay2014}
Wang, Q. and Lindsay, B. (2014).
Variance estimation of a general U-statistic with application to
cross-validation.
\emph{Statistica Sinica}, \textbf{24}, 1117--1141.
\url{https://doi.org/10.5705/ss.2012.215}.

 \bibitem[Zhang et~al.(2018)Zhang, Yao, and Shao]{ZhangYaoShao2018}
Zhang, X., Yao, S., and Shao, X. (2018).
Conditional mean and quantile dependence testing in high dimension.
\emph{Annals of Statistics}, \textbf{46}, 219--246.
\url{https://doi.org/10.1214/17-AOS1548}.

\bibitem[Zhu et~al.(2020)Zhu, Zhang, Yao, and Shao]{ZhuZhangYaoShao2020}
Zhu, C., Zhang, X., Yao, S., and Shao, X. (2020).
Distance-based and RKHS-based dependence metrics in high dimension.
\emph{Annals of Statistics}, \textbf{48}, 3366--3394.
\url{https://doi.org/10.1214/19-AOS1934}.

\end{thebibliography}
\end{document}